\newtheorem{theorem}{Theorem}
\newtheorem{lemma}[theorem]{Lemma}
\theoremstyle{definition}
\def \mod#1{{\:({\rm mod}\ #1)}}
\def \Z{\mathbb{Z}}
\def \A{\mathcal{A}}
\def \B{\mathcal{B}}
\def \T{\mathcal{T}}
\def \D{\mathcal{D}}
\def \G{\mathcal{G}}
\def \leq {\leqslant}
\def \geq {\geqslant}
\let\oldproofname=\proofname
\renewcommand{\proofname}{\textup{\textbf{\oldproofname}}}
\title{Smaller embeddings of partial $k$-star decompositions}
\author{Ajani De Vas Gunasekara \qquad  Daniel Horsley\\[2mm]
School of Mathematics, Monash University, Victoria 3800, Australia}
\date{}
\begin{document}

\maketitle

\begin{abstract}
A $k$-star is a complete bipartite graph $K_{1,k}$. For a graph $G$, a \emph{$k$-star decomposition of $G$} is a set of $k$-stars in $G$ whose edge sets partition the edge set of $G$. If we weaken this condition to only demand that each edge of $G$ is in at most one $k$-star, then the resulting object is a \emph{partial $k$-star decomposition of $G$}. An embedding of a partial $k$-star decomposition $\A$ of a graph $G$ is a partial $k$-star decomposition $\B$ of another graph $H$ such that $\A \subseteq \B$ and $G$ is a subgraph of $H$. This paper considers the problem of when a partial $k$-star decomposition of $K_n$ can be embedded in a $k$-star decomposition of $K_{n+s}$ for a given integer $s$. We improve a result of Noble and Richardson, itself an improvement of a result of Hoffman and Roberts, by showing that any partial $k$-star decomposition of $K_n$ can be embedded in a $k$-star decomposition of $K_{n+s}$ for some $s$ such that $s < \frac{9}{4}k$ when $k$ is odd and $s < (6-2\sqrt{2})k$ when $k$ is even. For general $k$, these constants cannot be improved. We also obtain stronger results subject to placing a lower bound on $n$.

\end{abstract}

\section{Introduction}

A \emph{$k$-star decomposition} of a graph $G$ is a collection of copies of $K_{1,k}$ in $G$ such that each edge of $G$ is in exactly one copy. If we weaken this condition to demand that each edge of $G$ is in at most one copy, then the resulting object is a \emph{partial $k$-star decomposition}. An \emph{embedding} of a partial $k$-star decomposition $\A$ of a graph $G$ is a partial $k$-star decomposition $\B$ of another graph $H$ such that $\A \subseteq \B$ and $G$ is a subgraph of $H$. The \emph{leave} of a partial $k$-star decomposition of $G$ is the graph $L$ having vertex set $V(G)$ and edge set comprising all edges of $G$ that are not in a $k$-star in the decomposition.

The problem of determining when a graph has a decomposition into $k$-stars has been thoroughly investigated. An obvious necessary condition for a graph to have a $k$-star decomposition is that its number of edges is divisible by $k$. Trivially, any graph has a decomposition into 1-stars. A simple inductive argument shows that any connected graph with an even number of edges has a 2-star decomposition (see \cite[Theorem 1]{CarSch}). Tarsi \cite{Tarsi1979} and Yamamoto et al. \cite{Yamamoto1975} independently proved that, for $n \geq 2$, a $k$-star decomposition of $K_n$ exists if and only if $n \geq 2k$ and $\binom{n}{2} \equiv 0 \mod{k}$. In fact, Tarsi gave necessary and sufficient conditions for the existence of a decomposition of a complete multigraph into $k$-stars while Yamamoto et al. also proved an analogous statement for complete bipartite graphs.

 A result of Dor and Tarsi \cite{DorTarsi1997} implies that determining whether an arbitrary graph $G$ has a $k$-star decomposition is $\mathsf{NP}$-complete whenever $k \geq 3$. A results of Tarsi \cite{Tarsi1981} gives a characterisation of when an arbitrary graph $G$ has a $k$-star decomposition in which the number of $k$-stars that are centred on each vertex is specified. Other results in \cite{Tarsi1981} imply various sufficient conditions for a graph to have a decomposition into $k$-stars. Hoffman and Roberts \cite{HoffmanRoberts20142}  exactly determined the maximum possible number of $k$-stars in a partial $k$-star decomposition of $K_n$ and moreover characterised the possible leaves.

This paper is concerned with the problem of when a partial $k$-star decomposition of $K_n$ can be embedded in a $k$-star decomposition of $K_{n+s}$. In 2012, Hoffman and Roberts \cite{HoffmanRoberts2014}  proved that a partial $k$-star decomposition of $K_n$ can be embedded in a $k$-star decomposition of $K_{n+s}$ for some positive integer $s$ such that $s \leq 7k-4$ when $k$ is odd and $s \leq 8k-4$ when $k$ is even. Furthermore, they conjectured that the smallest possible upper bound on $s$ is around $2k$. In 2019, Noble and Richardson \cite{NobleRichardson2019} improved the bounds on $s$ to $s \leq 3k-2$ when $k$ is odd and $s \leq 4k-2$ when $k$ is even. As our first main result of the paper we further improve these bounds.

\begin{theorem} \label{T:NRimprovement}
Let $k \geq 2$ and $n \geq 1$ be integers. Any partial $k$-star decomposition of $K_n$ can be embedded in a $k$-star decomposition of $K_{n+s}$ for some $s$ such that $s < \frac{9}{4}k$ when $k$ is odd and $s < (6-2\sqrt{2})k$ when $k$ is even.
\end{theorem}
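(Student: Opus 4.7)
The strategy is to build on the framework of Noble and Richardson. Given a partial $k$-star decomposition $\A$ of $K_n$ with leave $L$, I would introduce a set $U$ of $s$ new vertices; extending $\A$ then reduces to producing a $k$-star decomposition of the residual graph $H := L \cup K[V(K_n),U] \cup K[U]$.

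I would begin with a preprocessing step: any vertex $v$ of $K_n$ with $\deg_L(v) \geq k$ may be taken as the centre of an additional $k$-star using $k$ of its leave edges, which shrinks $L$. Iterating, we may assume $\deg_L(v) < k$ for every $v \in V(K_n)$, so that in particular $|E(L)| < kn/2$. I would then choose $s$ to satisfy both $k \mid |E(H)|$ (equivalently $k \mid \binom{n+s}{2}$) and $n + s \geq 2k$. For $k$ odd, the condition $k \mid (n+s)(n+s-1)$ constrains the gap between consecutive admissible $s$; the constant $\tfrac{9}{4}$ arises as the worst case, when $k$ factorises into two prime powers of comparable size. For $k$ even, a separate analysis of the $2$-part of $k$ produces the weaker bound $s < (6-2\sqrt{2})k$ via a quadratic optimisation balancing the divisibility gap against the need to absorb odd parity at old vertices.

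The core of the proof is constructing, for this $s$, an explicit $k$-star decomposition of $H$. For each vertex $v \in V(K_n)$ I would aim to centre about $(\deg_L(v) + s)/k$ stars at $v$, leaving a small deficit $b_v < k$ of edges at $v$ that must be absorbed as leaves of stars centred elsewhere. The remaining edges of $H$, together with $K[U]$, are then to be packed into stars centred at vertices of $U$, with the centre-counts $a_u$ chosen to clear the leaf-deficits $b_v$ across $V(K_n)$ and to saturate each $u \in U$. Realisability of such a scheme can be verified via Tarsi's characterisation \cite{Tarsi1981} of graphs that admit a $k$-star decomposition with prescribed numbers of star-centres at each vertex.

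The main obstacle is ensuring that the constructed centre-assignment always satisfies Tarsi's structural conditions, especially when $n$ is small relative to $k$ or when divisibility forces $s$ close to its claimed upper bound; these boundary cases will likely demand tailored constructions, for instance placing specific stars by hand to break ties. A secondary difficulty is establishing the sharpness of the constants $\tfrac{9}{4}$ and $6-2\sqrt{2}$, which requires exhibiting families of values of $n$ and $k$ together with partial decompositions $\A$ that genuinely resist any smaller embedding.
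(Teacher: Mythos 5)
Your outline shares the paper's engine -- reducing to a leave of maximum degree at most $k-1$, choosing $s$ in a divisibility window, and certifying a decomposition of $L \vee K_s$ with prescribed centre-counts via Tarsi's characterisation (this is exactly Lemma~\ref{L:hoffman} in the paper) -- but it misses the obstruction that actually determines the constants, and consequently the two ideas needed to overcome it. The binding constraint is not divisibility. For odd $k$ the window $\{k,\ldots,2k-2\}$ already contains an $s$ with $n+s\equiv 0$ or $1\pmod{k}$, so if divisibility were the only issue you would get $s\leq 2k-2$, not $\tfrac94 k$; your attribution of $\tfrac94$ to ``$k$ factorising into two prime powers of comparable size'' is not where the constant comes from (the tight examples in Lemmas~\ref{L: even bound on s} and~\ref{L: odd bound on s} have $k$ a prime power). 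The real obstruction is independence: in any $k$-star decomposition of $L\vee K_s$ the vertices carrying no centre form an independent set of $L$, so one needs $\alpha(L)\geq n+s-\tfrac1k|E(L\vee K_s)|$ (Lemma~\ref{l:obstacle}). A leave that is a disjoint union of cliques of size about $\sqrt{2k}$ has small independence number and genuinely forces $s$ up to about $(4-2\sqrt2)k$ even when all divisibility conditions hold; adding the $2k$-window for even $k$ yields $6-2\sqrt2$. Your scheme of centring roughly $(\deg_L(v)+s)/k$ stars at each old vertex collapses exactly here: for $s<k$ and $\deg_L(v)$ small this count is $0$ for many vertices, and if two adjacent leave-vertices both receive count $0$ their common edge cannot be absorbed by stars centred in $U$. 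Nothing in your proposal rules this out.

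To make the argument work you need two further inputs that your sketch does not contain. First, a proof that for maximal leaves and $s\geq k$ the independence condition together with divisibility is \emph{sufficient} (the paper's Lemmas~\ref{L: no of total edges are small} and~\ref{L: no of total edges are large}, proved by a careful case analysis of the sets $T$ minimising $\Delta_T$ in Tarsi's criterion), paired with the Caro--Wei bound $\alpha(L)\geq n^2/(2|E(L)|+n)$ to verify the condition once $n\gtrsim 2\sqrt2\,k$. Second, and this is essential for the odd case specifically, in the range $k<n\leq 2k$ the generic Caro--Wei bound is not strong enough to reach $\tfrac94 k$: one must observe that the vertices of $K_n$ on which no star of the partial decomposition is centred are pairwise adjacent in the leave, so the leave contains a clique of order $n-\tfrac1k\bigl(\tbinom n2-|E(L)|\bigr)$, and this forced clique improves the independence bound (Lemmas~\ref{L:independent set} and~\ref{L: newconditions1 on s}) just enough to cover $n\leq\tfrac74k$ with $s=3k-n$. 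Without that refinement the odd bound degrades. Your closing remarks on sharpness are correct in spirit but would likewise need the clique-union leaves described above, not number-theoretic properties of $k$.
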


If either of the constants $\frac{9}{4}$ or  $6-2\sqrt{2} \approx 3.17$ in the above result were decreased then the result would fail to hold for infinitely many $k$ (see Lemmas~\ref{L: even bound on s} and~\ref{L: odd bound on s}). Our next main result shows, however, that these constants can be improved if we impose a lower bound on $n$.

\begin{theorem} \label{T: statement1}
Let $k \geq 2$ and $n > \frac{k(k-1)}{\sqrt{8k}-1}$ be integers.
Any partial $k$-star decomposition of $K_n$ can be embedded in a $k$-star decomposition of $K_{n+s}$ for some $s$ such that $s \leq 2k-2$ when $k$ is odd and $s \leq 3k-2$ when $k$ is even.
\end{theorem}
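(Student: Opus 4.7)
The plan is to reduce the embedding problem to a $k$-star decomposition of a specific auxiliary graph, and then to verify a Tarsi-type orientation condition for that decomposition.

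First, I would replace $\A$ by a maximum extension within $K_n$; this is without loss of generality because any embedding of the extended decomposition is also an embedding of $\A$. By the Hoffman--Roberts characterisation of leaves of maximum packings \cite{HoffmanRoberts20142}, the resulting leave $L$ then satisfies $|L| < k$ whenever $n \geq 2k$, and in particular $d_L(v) < k$ for every $v$. A short calculation shows that the hypothesis $n > k(k-1)/(\sqrt{8k}-1)$ implies $n \geq 2k$ for all but finitely many small values of $k$, which I would dispatch separately.

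Next, I would pick the smallest $s \geq \max(0, 2k - n)$ with $k \mid \binom{n+s}{2}$. A routine analysis of the residues of $\binom{m}{2}$ modulo $k$ (two admissible residues of $m$ modulo $k$ for odd $k$, four modulo $2k$ for even $k$) shows that such an $s$ always satisfies $s \leq 2k-2$ when $k$ is odd and $s \leq 3k-2$ when $k$ is even.

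Having fixed $s$, the embedding problem is equivalent to producing a $k$-star decomposition of
\[
G' = L \,\cup\, K_{V,U} \,\cup\, K_U,
\]
where $V = V(K_n)$ and $U$ is a set of $s$ new vertices. I would construct this decomposition in two stages. Letting $C$ be a minimum vertex cover of $L$, orient every edge of $L$ toward its endpoint in $C$; because $d_L(v) < k$, each $v \in C$ has out-degree at most $k-1$, so a single new star centred at $v$ can absorb all its out-oriented leave-neighbours together with enough elements of $U$ to make the star exactly $k$ edges. All of $L$ is then covered.

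It remains to decompose the residual graph $H$, obtained from $K_{V,U} \cup K_U$ by deleting the $V$-$U$ edges used in the first stage, using only stars centred in $U$. By Tarsi's criterion \cite{Tarsi1981}, this is equivalent to selecting non-negative integers $(c_u)_{u \in U}$ with $k c_u \leq d_H(u)$, $\sum_u c_u = |E(H)|/k$, and
\[
\sum_{u \in S \cap U} k c_u \geq |E(H[S])| \quad \text{for every } S \subseteq V \cup U.
\]
I would take each $c_u$ close to $(n + s - 1 - \beta_u)/k$, where $\beta_u$ is the number of first-stage stars in which $u$ appears as a leaf, and distribute the $\beta_u$'s as evenly as possible.

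The main obstacle will be verifying the Tarsi inequalities on subsets of the form $S = (V \setminus C) \cup S'$ with $S' \subseteq U$, since for these $|E(H[S])|$ picks up a large $(n - |C|)|S'|$ term while the left-hand side is limited by the budget on the $c_u$'s. Working through the worst case reduces to a quadratic-in-$s$ inequality whose solvability is equivalent to $n(\sqrt{8k}-1) > k(k-1)$, which is precisely the hypothesis. Once that case is handled, the remaining Tarsi inequalities are routine, and partitioning each $u$'s outgoing edges into $c_u$ groups of $k$ delivers the second-stage stars; combining these with the first-stage stars yields the desired embedding.
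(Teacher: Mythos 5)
There are two genuine gaps here, and each one is witnessed by an explicit counterexample elsewhere in the paper.

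First, your opening reduction is invalid. You cannot in general ``replace $\A$ by a maximum extension within $K_n$'': a partial $k$-star decomposition that is maximal (no further star can be added) need not be contained in any maximum one, and removing and re-adding stars is not allowed, since an embedding of the altered decomposition is not an embedding of $\A$. The only legitimate greedy reduction is to a \emph{maximal} partial decomposition, whose leave has maximum degree at most $k-1$ but can have as many as $\tfrac{1}{2}n(k-1)$ edges --- so the Hoffman--Roberts bound $|E(L)|<k$ and the consequent tiny vertex cover are unavailable. This is not a technicality: Lemma~\ref{L: bound on n} exhibits a maximal partial decomposition whose leave is a disjoint union of copies of $K_{\sqrt{2k}}$ (about $\tfrac{n}{2}\sqrt{2k}$ edges, independence number only $n/\sqrt{2k}$) that has no embedding in $K_{n+k}$ even though the divisibility condition holds. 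If your reduction were sound, your own cover-counting would show such an embedding exists. The hypothesis $n>k(k-1)/(\sqrt{8k}-1)$ is needed precisely to control $\alpha(L)$ (equivalently the cover size $|C|=n-\alpha(L)$) for leaves with many edges; the paper does this via the Caro--Wei bound $\alpha(L)\geq n^2/(2|E(L)|+n)$, which is the ingredient your ``worst case'' analysis would have to supply and which your sketch never identifies.

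Second, your choice of $s$ is too small. Taking the smallest $s\geq\max(0,2k-n)$ with $k\mid\binom{n+s}{2}$ can produce $s\in\{k-2,k-1\}$: for $k$ an odd prime power and $n\equiv 2\pmod{2k}$ the maximal leave is a single edge and the smallest admissible $s$ is $k-2$, for which both endpoints of that edge have degree $k-1$ in $L\vee K_s$ and so neither can be a star centre --- your first-stage star at $v\in C$ cannot even be formed. The next admissible value $s=k-1$ also fails, by the pigeonhole argument of Lemma~\ref{L:counter ex for T1 and T2}(b). One must insist on $s\geq k$; the residue count you perform still places such an $s$ in $\{k,\ldots,2k-2\}$ for odd $k$ and $\{k,\ldots,3k-2\}$ for even $k$, which is exactly what the paper does before invoking Theorem~\ref{T: statement2complete}. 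With these two repairs --- maximal rather than maximum extension, $s\geq k$, and Caro--Wei feeding the Tarsi-type verification --- your cover-plus-orientation scheme becomes essentially the paper's proof (its Lemmas~\ref{L:hoffman}, \ref{L: no of total edges are small} and \ref{L: no of total edges are large}), so the architecture is right but the two steps you treat as routine are exactly where the theorem's hypotheses do their work.
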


Neither of the upper bounds on $s$ in this result can be decreased, no matter what lower bound we place on $n$ (see Lemmas~\ref{L:counter ex for T1 and T2}(c) and~\ref{L: tightness of T2}(b)). We prove Theorem~\ref{T: statement1} as a consequence of the following result which shows that, when $s \geq k$ and $n$ is large enough, the obvious necessary condition is also sufficient for the existence of an embedding of a partial $k$-star decomposition of $K_n$ in a $k$-star decomposition of $K_{n+s}$.

\begin{theorem} \label{T: statement2complete}
Let $k \geq 2$ and $n > \frac{k(k-1)}{\sqrt{8k}-1}$ be integers. Any nonempty partial $k$-star decomposition of $K_n$ can be embedded in a $k$-star decomposition of $K_{n+s}$ for each $s \geq k$ such that  $\binom{n+s}{2} \equiv 0 \mod{k}$.
\end{theorem}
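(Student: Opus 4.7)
My strategy is to reduce the embedding problem to a prescribed-out-degree edge-orientation problem, via Tarsi's classical characterisation of $k$-star decompositions~\cite{Tarsi1981}. Adjoin a set $S$ of $s$ new vertices and let $G := K_{n+s} - E(\A)$; then an embedding of $\A$ in a $k$-star decomposition of $K_{n+s}$ is precisely a $k$-star decomposition of $G$. The graph $G$ is the edge-disjoint union of the leave $L$ of $\A$ (inside $V(K_n)$), the complete bipartite graph between $V(K_n)$ and $S$, and the clique on $S$. The congruence hypothesis, together with $k \mid |E(\A)|$, yields $k \mid |E(G)|$.

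By Tarsi's theorem, $G$ admits a $k$-star decomposition with exactly $c_v$ stars centred at each vertex $v$ if and only if the non-negative integers $(c_v)_{v \in V(G)}$ satisfy (i) $\sum_v k c_v = |E(G)|$, (ii) $k c_v \leq \deg_G(v)$ for every $v$, and (iii) $\sum_{v \in U} k c_v \geq e_G(U)$ for every $U \subseteq V(G)$, the last being the max-flow criterion for the existence of an orientation of $G$ with prescribed out-degrees $k c_v$. The goal is thus to exhibit such integers $(c_v)$.

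My plan is to first assign each leave edge to one of its endpoints, obtaining $\alpha_v$ leave edges assigned to each old vertex $v$, and set $c_v := \lceil \alpha_v / k \rceil$. The $c_v$ stars centred at $v$ then use the $\alpha_v$ assigned leave edges together with $k c_v - \alpha_v < k$ edges to $S$ as padding, which fits because $s \geq k$. The residual centring budget $|E(G)|/k - \sum_{v \in V(K_n)} c_v$ is distributed as uniformly as possible among the vertices of $S$, subject to the per-vertex cap $c_w \leq \lfloor (n+s-1)/k \rfloor$. Condition (i) is arranged directly, and condition (ii) follows for old $v$ from $s \geq k$ and for new $w$ from the cap, which accommodates the average by a wide margin.

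The main obstacle is verifying (iii) for every $U$. The cases $U \subseteq V(K_n)$ (where $\sum_{v \in U} k c_v \geq \sum_{v \in U} \alpha_v \geq |E(L[U])| = e_G(U)$) and $U \supseteq S$ are handled easily. The delicate cases are $U = S' \cup V'$ with $S' \subsetneq S$ and $V' \subseteq V(K_n)$, particularly when $|S'|$ is close to $s$: the padding slack $\sum_v (k c_v - \alpha_v)$, which can reach $(k-1)n$ for a poor assignment, must be counterbalanced by the bipartite leakage from $U$ into $V(G) \setminus U$. A quadratic estimate in $|S'|$, $|V'|$, and $n$, using $s \geq k$ and the hypothesis $n > k(k-1)/(\sqrt{8k} - 1)$, should show that this counterbalance always suffices; a judicious choice of the leave-edge assignment (to minimise $\sum_v (k c_v - \alpha_v)$) and of the distribution of $c_w$ over $S$ is expected to be needed in the extremal cases. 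The nonemptiness of $\A$ will be used to rule out the degenerate situation $L = K_n$, in which the argument could otherwise fail. Once (iii) is verified for all $U$, Tarsi's theorem delivers the required decomposition of $G$.
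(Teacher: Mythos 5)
Your framework coincides with the paper's: your ``Tarsi orientation criterion'' is Lemma~\ref{L:hoffman}, and your centre counts (one star centred at each vertex of a vertex cover of the leave $L$, the rest of the budget spread near-uniformly over $S$) are essentially the precentral functions used in Lemmas~\ref{L: no of total edges are small} and~\ref{L: no of total edges are large}. The problem is that the proposal stops exactly where the proof starts. The verification of condition (iii) for mixed sets $U=S'\cup V'$ is deferred with ``should show'' and ``is expected to be needed''; that verification is the entire content of the paper's argument (a multi-case computation occupying Lemma~\ref{L: no of total edges are large} and the quadratic estimate of Lemma~\ref{L: newconditions on s}), so nothing substantive has been established. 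You also do not treat the regime where the budget $b=\frac{1}{k}|E(G)|$ satisfies $b\geq n+s$, in which the $c_w$ for $w\in S$ are large and the delicate subsets $U$ are those meeting $S$ in the vertices of smaller centre count (Case 3 of Lemma~\ref{L: no of total edges are large}).

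More seriously, you have not identified where the hypothesis $n>\frac{k(k-1)}{\sqrt{8k}-1}$ must enter, and without that the plan cannot close. In your scheme the set $\{v:\alpha_v\geq 1\}$ is a vertex cover of $L$, so $\sum_{v\in V(K_n)}c_v\geq n-\alpha(L)$; and every $w\in S$ must get $c_w\geq 1$, since otherwise $U=\{y,w\}$ with $y$ an old vertex outside the cover violates (iii). Hence you need $b\geq (n-\alpha(L))+s$, i.e.\ $\alpha(L)\geq n+s-b$. This is precisely the necessary condition of Lemma~\ref{l:obstacle}, it can genuinely fail (Lemma~\ref{L: bound on n}), and the lower bound on $n$ exists exactly to rule that failure out via the Caro--Wei bound $\alpha(L)\geq n^2/(2|E(L)|+n)$ of Theorem~\ref{T:CaroWei} applied to the maximum-degree-$(k-1)$ leave. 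Your proposal never shows $L$ has a vertex cover of size at most $b-s$, so even the feasibility of your budget distribution is not secured. To complete the argument you would need to add the Caro--Wei step, carry out the quadratic-in-$s$ estimate that turns it into $\alpha(L)\geq n+s-b$ under the stated bound on $n$, and then perform the full case analysis for condition (iii).
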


The lower bound on $s$ in this result cannot be decreased no matter what lower bound we place on $n$ (see Lemma~\ref{L:counter ex for T1 and T2}(b)). Moreover, the lower bound on $n$ is asymptotically best possible as $k$ becomes large (see Lemma~\ref{L: bound on n}).

\section{Central functions and other preliminaries}

We introduce some more notation that we use throughout the paper.  Let $G$ be a graph. Let $E(G)$, $V(G)$ and $\overline{G}$ denote the edge set, vertex set and complement of $G$ respectively. For any $x \in V(G)$, $\deg_G(x)$ denotes the degree of $x$ in $G$. The \emph{neighbourhood} $N_G(x)$ of a vertex $x \in V(G)$  is the set of all vertices which are adjacent to $x$ in $G$. For a subset $U$ of $V(G)$ we use $G[U]$ to denote the subgraph of $G$ induced by $U$.

For a set $S$ of vertices we use $K_S$ to denote the complete graph with vertex set $S$, and for disjoint sets $S$ and $T$ of vertices we use $K_{S,T}$ to denote the complete bipartite graph with parts $S$ and $T$. For vertex-disjoint graphs $G$ and $H$ we use $G \vee H$ to denote the graph with vertex set $V(G) \cup V(H)$ and edge set $E(G) \cup E(H) \cup E(K_{V(G),V(H)})$. Our use of the notation $K_{S,T}$ will imply that $S$ and $T$ are disjoint and our use of the notation $G \vee H$ will imply that $G$ and $H$ are vertex-disjoint. As a special case, we take $G \vee K_\emptyset$ or $G \vee K_0$ to be simply the graph $G$. We can embed a partial $k$-star decomposition $\D$ of $K_n$ in a $k$-star decomposition of $K_{n+s}$ for some nonnegative integer $s$ if and only if there is a $k$-star decomposition of $L \vee K_s$, where $L$ is the leave of $\D$.

We begin by emphasising the necessary and sufficient conditions for the existence of a $k$-star decomposition of $K_n$ that we mentioned in the introduction and highlighting their effects in the special case where $k$ is a prime power.

\begin{theorem}\textup{\cite{Tarsi1979, Yamamoto1975}} \label{T: TarsiandYamamoto}
Let $k \geq 2$ and $n \geq 2$ be positive integers.
\begin{itemize}
    \item[\textup{(a)}]
A $k$-star decomposition of $K_n$ exists if and only if $n \geq 2k$ and $\binom{n}{2} \equiv 0 \mod{k}$.
    \item[\textup{(b)}]
If $k$ is a power of $2$ then a $k$-star decomposition of $K_n$ exists if and only if $n \geq 2k$ and $n \equiv 0 \mod{2k}$ or $n \equiv 1 \mod{2k}$.
    \item[\textup{(c)}]
If $k$ is a power of an odd prime then a $k$-star decomposition of $K_n$ exists if and only if $n \geq 2k$ and $n \equiv 0 \mod{k}$ or $n \equiv 1 \mod{k}$.
\end{itemize}
\end{theorem}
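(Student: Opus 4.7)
The plan is to prove part (a) directly; parts (b) and (c) then follow by simplifying the divisibility condition $k \mid \binom{n}{2}$ when $k$ is a prime power. Using $\gcd(n, n-1) = 1$: for $k = 2^a$, $k \mid \binom{n}{2}$ iff $2^{a+1} \mid n(n-1)$ iff $2k \mid n$ or $2k \mid n-1$; for $k = p^a$ with $p$ an odd prime, $k \mid \binom{n}{2}$ iff $p^a \mid n(n-1)$ iff $k \mid n$ or $k \mid n-1$.

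For necessity in (a), the edge count gives $k \mid \binom{n}{2}$ immediately. For the bound $n \geq 2k$, I would suppose $n < 2k$. Then each vertex centers at most $\lfloor (n-1)/k \rfloor \leq 1$ star, and any non-centering vertex $v$ forces every one of its $n-1$ neighbors to center a star (since each edge at $v$ must be a leaf-edge of a star at its other endpoint). Hence at most one non-center can exist, and equating $k \cdot |C| = \binom{n}{2}$ with $|C| \in \{n-1, n\}$ forces $n \in \{2k, 2k+1\}$, contradicting $n < 2k$.

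For sufficiency in (a), I would induct on $n$ using the partition $E(K_{n+2k}) = E(K_n) \sqcup E(K_{n,2k}) \sqcup E(K_{2k})$. The bipartite piece $K_{n,2k}$ decomposes into $k$-stars by partitioning each small-side vertex's $2k$ opposite neighbors into two $k$-subsets and centering one $k$-star at that vertex per subset. Since $\binom{n+2k}{2} - \binom{n}{2} = k(2n+2k-1)$ is divisible by $k$, the divisibility condition is preserved, reducing sufficiency to the base cases $n \in [2k, 4k)$ with $k \mid \binom{n}{2}$ (including $n = 2k$ itself, which is needed in every inductive step). The key reformulation is that a $k$-star decomposition of $K_n$ is equivalent to an orientation of $K_n$ in which every in-degree is a multiple of $k$: the in-edges at each vertex $v$ then split into $c_v = d_v/k$ disjoint blocks of size $k$, each forming a star centered at $v$. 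Existence of such an orientation can be established via Landau's score-sequence theorem, applied to a prescribed sequence of multiples of $k$ summing to $\binom{n}{2}$ and chosen as balanced as possible around the average $(n-1)/2$.

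The main obstacle is verifying Landau's partial-sum inequality $\sum_{i=1}^{j} d_{(i)} \geq \binom{j}{2}$ for the chosen sequence uniformly across the base-case range; the balanced choice makes this routine provided $n \geq 2k$, but the edge cases ($n = 2k$, where exactly one vertex must have in-degree $0$ and the rest in-degree $k$) deserve direct verification. For the easiest residues, explicit cyclic constructions bypass Landau entirely: for $n = 2k+1$, the regular Cayley tournament on $\mathbb{Z}_n$ with connection set $\{1, \ldots, k\}$ realises in-degree $k$ at every vertex, giving one $k$-star per vertex and decomposing $K_{2k+1}$ outright.
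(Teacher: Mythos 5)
The paper itself does not prove part (a): it is quoted from Tarsi and from Yamamoto et al., and the only argument the paper supplies for this theorem is the one-sentence deduction of (b) and (c) from (a) via the equivalence of the divisibility conditions when $k$ is a prime power --- which is exactly your opening paragraph. So on (b) and (c) you coincide with the paper, while on (a) you are supplying a genuine proof where the paper supplies a citation. Your necessity argument is correct and complete: for $n<2k$ each vertex centres at most one star, the non-centres form an independent set of $K_n$ and hence number at most one, and $k\lvert C\rvert=\binom{n}{2}$ with $\lvert C\rvert\in\{n-1,n\}$ forces $n\in\{2k,2k+1\}$. Your sufficiency route is also sound, and the reformulation as an orientation with every in-degree divisible by $k$ is precisely the device the paper uses later (Lemma~\ref{L:hoffman}, via Tarsi's orientation theorem); you replace that by Landau's score-sequence criterion, which suffices because the graph here is complete. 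The step you flag as the main obstacle does close, and more cleanly than you suggest: for $2k<n<4k$ the balanced sequence has $a=n(4k+1-n)/(2k)$ entries equal to $k$ and $n-a$ entries equal to $2k$; since $n(4k+1-n)\le\lfloor(4k+1)^2/4\rfloor=2k(2k+1)$ one gets $a\le 2k+1$, which disposes of the prefix inequalities for $j\le a$, and for $j>a$ the required inequality is $j^2-(4k+1)j+2ak\le 0$, whose left side factors as $(j-n)\bigl(j-(4k+1-n)\bigr)$ and is therefore nonpositive on all of $a<j\le n$ because $a>4k-n$. Together with the direct check of $(0,k,\dots,k)$ at $n=2k$, this makes your proposal a valid, self-contained elementary proof of the whole theorem; what the paper's route buys is brevity, and what yours buys is independence from the cited literature at the cost of this bookkeeping.
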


Parts (b) and (c) of Theorem~\ref{T: TarsiandYamamoto} follow immediately from part (a) because $\binom{n}{2} \equiv 0 \mod{k}$ is equivalent to $n \equiv 0 \mod{2k}$ or $n \equiv 1 \mod{2k}$ when $k$ is a power of 2 and is equivalent to $n \equiv 0 \mod{k}$ or $n \equiv 1 \mod{k}$ when $k$ is a power of an odd prime. We often exploit this limitation of the possible values of $n$ when $k$ is a prime power in our constructions of partial $k$-star decompositions without small embeddings.

As mentioned in the introduction, a simple inductive argument shows that any connected graph with an even number of edges has a 2-star decomposition (see \cite[Theorem 1]{CarSch}). This immediately implies the following characterisation of when a graph $L \vee K_s$ has a 2-star decomposition.

\begin{lemma}\label{L:k=2}
Let $L$ be a graph. There is a $2$-star decomposition of $L \vee K_s$ if and only if
\begin{itemize}
    \item
$s=0$ and each connected component of $L$ has an even number of edges; or
    \item
$s \geq 1$ and $|E(L \vee K_s)| \equiv 0 \mod{2}$.
\end{itemize}
\end{lemma}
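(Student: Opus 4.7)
The plan is to deduce this lemma directly from the cited fact that any connected graph with an even number of edges has a 2-star decomposition. I would treat necessity and sufficiency in turn, handling the $s=0$ and $s \geq 1$ cases in parallel.

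For necessity, I would first note that every 2-star contains exactly two edges, so any 2-star decomposition of $L \vee K_s$ forces $|E(L \vee K_s)|$ to be even, which is exactly the condition claimed in the $s \geq 1$ case. For the $s = 0$ case, I would add the further observation that $K_{1,2}$ is a connected graph, so any 2-star in a decomposition is contained in a single connected component of $L$; hence each component of $L$ is an edge-disjoint union of 2-stars and therefore has an even number of edges.

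For sufficiency when $s = 0$, I would apply the cited result to each connected component of $L$ individually and take the union of the resulting 2-star decompositions. When $s \geq 1$, the key step is to observe that $L \vee K_s$ is connected: every vertex of $V(K_s)$ is adjacent to every other vertex of $L \vee K_s$, and $V(K_s) \neq \emptyset$, so any two vertices have a common neighbour in $V(K_s)$. Combined with the hypothesis that $|E(L \vee K_s)|$ is even, the cited result immediately produces the desired decomposition.

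There is no real obstacle here; the lemma is essentially a direct corollary of the prerequisite connectedness-plus-parity result, and the only things to verify carefully are the connectedness of $L \vee K_s$ when $s \geq 1$ and the componentwise application when $s = 0$. Both checks are routine, and the case split in the lemma's statement corresponds cleanly to whether the join operation forces connectedness.
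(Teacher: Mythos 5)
Your proposal is correct and matches the paper's approach: the paper gives no separate proof, stating only that the lemma follows immediately from the cited fact that every connected graph with an even number of edges has a $2$-star decomposition, and your write-up simply makes the routine necessity and connectedness/componentwise checks explicit.
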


Let $k \geq 2$ be an integer. In a $k$-star, the vertex of degree $k$ is called the \emph{centre}. For a given $k$-star decomposition $\D$ of $G$, we can define a function $\gamma: V(G) \rightarrow \Z^{\geq 0}$ called the \emph{central function}, where $\gamma(x)$ is the number of $k$-stars of $\D$ whose centre is $x$ for each $x \in V(G)$.  It will be helpful to bear in mind the three following properties that must hold for any central function $\gamma$ of a $k$-star decomposition of a graph $G$.
\begin{itemize}
    \item
$k\sum_{x \in V(G)}\gamma(x)=|E(G)|$.
    \item
For each edge $x_1x_2$ of $G$, $\gamma(x_1)+\gamma(x_2) \geq 1$.
    \item
For each vertex $x$ of $G$, $k\gamma(x) \leq \deg_{G}(x)$ and if $k\gamma(x) = \deg_{G}(x)$ then each edge of $G$ incident with $x$ is in a $k$-star of $\D$ centred at $x$.
\end{itemize}
We call a function $\gamma: V(G) \rightarrow \Z^{\geq 0}$ such that $k\sum_{x \in V(G)}\gamma(x)=|E(G)|$ a \emph{$k$-precentral function} for $G$. Crucial to our approach in this paper is Lemma~\ref{L:hoffman} below, which that characterises when a $k$-star decomposition of a graph $G$ with a specified central function exists. Lemma~\ref{L:hoffman} is a simple consequence of a result of Tarsi  \cite[Theorem~2]{Tarsi1981}. Because we will use Lemma~\ref{L:hoffman} so extensively, we first introduce some notation that simplifies its statement and use.

Let $\G$ be a graph $G$ equipped with a $k$-precentral function $\gamma$ (note that $G$ and $\gamma$ determine the value of $k$). We call a $k$-star decomposition of $G$ in which there are $\gamma(x)$ stars centred at $x$ for each $x \in V(G)$ a \emph{star $\G$-decomposition}. The notation we now define is implicitly dependent on $\G$, which will always be obvious from context. For any subset $T$ of $V(G)$, let $\Delta_T=\Delta^+_T-\Delta^-_T$ where $\Delta^-_T=k\sum_{x \in T}\gamma(x)$, $\Delta^+_T=|E_T|$, and $E_T$ is the set of edges of $G$ that are incident to at least one vertex in $T$. Let $\Delta$ be the minimum of $\Delta_T$ over all subsets $T$ of $V(G)$ and note that taking $T=\emptyset$ implies that $\Delta \leq 0$. Let $\T$ be the collection of subsets $T$ of $V(G)$ for which $\Delta_T=\Delta$ and which, subject to this, have minimum cardinality.
\begin{lemma}\label{L:hoffman}
Let $k \geq 2$ be an integer and let $\G$ be a graph $G$ equipped with a $k$-precentral function $\gamma$.
\begin{itemize}
    \item[\textup{(i)}]
There exists a star $\G$-decomposition if and only if $\Delta=0$.
    \item[\textup{(ii)}]
For each $T \in \T$, $T \subseteq \{x \in V(G): \gamma(x) \geq 1\}$.
\end{itemize}
\end{lemma}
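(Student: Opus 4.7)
My plan is to convert the existence of a star $\G$-decomposition into an orientation problem on $G$. Given a star $\G$-decomposition, I orient each edge of $G$ away from the centre of the star that contains it; each vertex $x$ then has out-degree exactly $k\gamma(x)$. Conversely, given an orientation of $G$ in which every vertex $x$ has out-degree $k\gamma(x)$, I can partition the out-edges at each $x$ arbitrarily into $\gamma(x)$ groups of $k$, each such group forming a star centred at $x$; the resulting collection of stars is a star $\G$-decomposition. So a star $\G$-decomposition exists if and only if $G$ admits an orientation with out-degree sequence $(k\gamma(x))_{x \in V(G)}$.

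For part (i), I would invoke the cited result of Tarsi. Translated to our language, that theorem supplies the standard Hall/flow condition for such an orientation to exist: for every $U \subseteq V(G)$, the number of edges of $G[U]$ is at most $k\sum_{v \in U}\gamma(v)$. Setting $T = V(G) \setminus U$ and exploiting $k\sum_{v \in V(G)}\gamma(v) = |E(G)|$, a short bookkeeping step rewrites the inequality $|E(G[U])| \leq k\sum_{v \in U}\gamma(v)$ as $\Delta_T \geq 0$. Since $\Delta_\emptyset = 0$ forces $\Delta \leq 0$, the orientation condition holds uniformly if and only if $\Delta = 0$, which is the equivalence claimed in (i).

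For part (ii), a direct minimality argument suffices. Suppose, for contradiction, that some $T \in \T$ contains a vertex $x$ with $\gamma(x) = 0$, and set $T' = T \setminus \{x\}$. Because $\gamma(x)=0$, $\Delta^-_{T'} = \Delta^-_T - k\gamma(x) = \Delta^-_T$; and because $E_{T'} \subseteq E_T$, we have $\Delta^+_{T'} \leq \Delta^+_T$. Combining these gives $\Delta_{T'} \leq \Delta_T = \Delta$, and since $\Delta$ is the minimum value of $\Delta_{(\cdot)}$ we must in fact have $\Delta_{T'} = \Delta$. But $|T'| < |T|$, contradicting the minimum-cardinality choice of $T$.

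The only real obstacle is identifying the orientation-existence condition of Tarsi and translating its form (inequalities over $U$, using edges inside $U$) into the form used in this paper (inequalities over $T$, using edges incident to $T$); the algebra is routine once the substitution $T = V(G)\setminus U$ is made. Part (ii) then follows mechanically from the way $\T$ is defined.
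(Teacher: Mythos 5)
Your proof is correct and follows essentially the same route as the paper: both reduce part (i) to the existence of an orientation with out-degrees $k\gamma(x)$, apply Tarsi's orientation criterion, and translate it via $T = V(G)\setminus U$ into the condition $\Delta = 0$; and both prove part (ii) by removing a vertex with $\gamma(x)=0$ from a minimal $T$ and contradicting minimality. No gaps.
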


\begin{proof}
We first prove (i). It is clear that a star $\G$-decomposition exists if and only if there is an orientation of the edges of $G$ such that exactly $k\gamma(x)$ edges are oriented out from $x$ for each $x \in V(G)$. Remember that $k\sum_{x \in V(G)}\gamma(x)=|E(G)|$ because $\gamma$ is a $k$-precentral function. Thus, by \cite[Theorem~2]{Tarsi1981} such an orientation exists if and only if $k\sum_{x \in S} \gamma(x) \geq |E(G[S])|$ for each subset $S$ of $V(G)$. For a given subset $S$ of $V(G)$, $k\sum_{x \in S}\gamma(x)=|E(G)|-\Delta^-_T$ and $E(G[S])=E(G) \setminus E_T$, where $T=V(G) \setminus S$. Thus, such an orientation exists if and only if
\begin{equation}\label{E:TCondition}
\Delta_T \geq 0 \qquad \text{for each subset $T$ of $V(G)$}.
\end{equation}
Because $\Delta_{\emptyset}=0$ and hence $\Delta \leq 0$, \eqref{E:TCondition} is equivalent to $\Delta=0$.

We now prove (ii). Let $T \in \T$ and suppose for a contradiction that $\gamma(x)=0$ for some $x \in T$. We have that $\Delta_{T \setminus \{x\}} \leq \Delta_T$ because $\Delta^-_{T \setminus \{x\}}=\Delta^-_T$ and $\Delta^+_{T \setminus \{x\}} \leq \Delta^+_T$ since $E_{T \setminus \{x\}} \subseteq E_T$. So, because $|T \setminus \{x\}| < |T|$, we have a contradiction to the definition of $\T$.
\end{proof}

Lemma~\ref{L:hoffman} can also be obtained by specialising results in \cite{Hoffman2004} or \cite{CameronHorsley2020} concerning star decompositions of multigraphs. Through our notation $\Delta^+_T$ and $\Delta^-_T$, the condition of Lemma~\ref{L:hoffman}(i)  is stated in the complement when compared to \cite[Theorem~2]{Tarsi1981}, but this makes it consistent with the statements in \cite{CameronHorsley2020,Hoffman2004}, which generalise more naturally to star packings of graphs.

We call a set $U$ of vertices of a graph $G$ \emph{pairwise twin}, if $N_G(x) \setminus \{y\} = N_G(y) \setminus \{x\}$ for all $x,y \in U$. The next lemma aids us when applying Lemma~\ref{L:hoffman} to graphs containing sets of pairwise twin vertices. Note that in a graph $G = L \vee K_S$, the vertices in $S$ are pairwise twin and so we can apply the lemma with $U$ chosen to be $S$.

\begin{lemma} \label{L:about S' in G}
Let $k \geq 2$ be an integer, let $G$ be a graph  and let $U$ be a pairwise twin subset of $V(G)$. Let $\G$ be the graph $G$ equipped with some $k$-precentral function $\gamma$ and let $T \in \T$. For any $x_1 \in U \setminus T$ and $x_2 \in T \cap U$ we have $\gamma(x_1)<\gamma(x_2)$. In particular, if $\gamma(x)=\gamma(x')$ for all $x, x' \in U$ then, for each $T \in \T$, either $U \subseteq T$ or $T \cap U = \emptyset$.
\end{lemma}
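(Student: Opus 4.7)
The plan is to compare $T$ with the swapped set $T':=(T\setminus\{x_2\})\cup\{x_1\}$. The twin hypothesis is exactly the statement that the transposition $x_1\leftrightarrow x_2$ is an automorphism of $G$, so it sends $E_T$ bijectively to $E_{T'}$; in particular $\Delta^+_{T'}=\Delta^+_T$. Since also $\Delta^-_{T'}-\Delta^-_T=k(\gamma(x_1)-\gamma(x_2))$, we obtain $\Delta_{T'}-\Delta_T=k(\gamma(x_2)-\gamma(x_1))$. The minimality of $\Delta=\Delta_T$ forces $\Delta_{T'}\geq\Delta_T$, and already yields the weak inequality $\gamma(x_1)\leq\gamma(x_2)$.

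The substantive step is upgrading this to strict. I would assume $\gamma(x_1)=\gamma(x_2)$ for contradiction; the identity above then gives $\Delta_{T'}=\Delta$, and since $|T'|=|T|$ matches the minimum cardinality defining $\T$, we get $T'\in\T$. My aim is to show that $T\cap T'=T\setminus\{x_2\}$ is also a minimizer of $\Delta$, contradicting the minimum-cardinality clause since it has strictly smaller cardinality than $T$. The cleanest route is submodularity of $T\mapsto|E_T|$ (a standard cut-type fact, with the penalty $k\sum_{x\in T}\gamma(x)$ being modular) to obtain
\[
\Delta_{T\cap T'}+\Delta_{T\cup T'}\leq\Delta_T+\Delta_{T'}=2\Delta;
\]
since both summands on the left are at least $\Delta$, each equals $\Delta$, delivering the desired contradiction.

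The ``in particular'' statement is then an immediate corollary: if $\gamma$ is constant on $U$ while both $T\cap U$ and $U\setminus T$ are nonempty, picking any $x_2\in T\cap U$ and $x_1\in U\setminus T$ would violate the strict inequality just proved.

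The main obstacle is exactly the step from weak to strict. The twin-based automorphism argument only gives $\Delta_{T'}\geq\Delta_T$, and one genuinely needs to exploit the minimum-cardinality part of the definition of $\T$ — via submodularity as above, or via a more computational direct exchange comparing both $T\setminus\{x_2\}$ and $T'\setminus\{x_1\}$ — to exclude equality; everything else is bookkeeping.
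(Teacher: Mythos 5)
Your proof is correct, but it takes a genuinely different route from the paper's. The paper never forms the swapped set $T'=(T\setminus\{x_2\})\cup\{x_1\}$; instead it compares $T$ directly with $T_1=T\cup\{x_1\}$ and $T_2=T\setminus\{x_2\}$, using the quantity $a=|N_G(x_1)\setminus T|$ (which by the twin property also equals $|N_G(x_2)\setminus(T\cup\{x_1\})|$) as a pivot: minimality of $\Delta$ gives $\Delta_{T_1}=\Delta_T+a-k\gamma(x_1)\geq\Delta_T$, hence $k\gamma(x_1)\leq a$, while minimum cardinality gives $\Delta_{T_2}>\Delta_T$ together with $\Delta_{T_2}\leq\Delta_T-a+k\gamma(x_2)$, hence $a<k\gamma(x_2)$; chaining these yields the strict inequality in one pass, with no separate treatment of the equality case. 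Your argument instead splits the work: the automorphism induced by the twin transposition gives $\Delta^+_{T'}=\Delta^+_T$ and hence the weak inequality, and then submodularity of $T\mapsto|E_T|$ (a coverage function) combined with modularity of $\Delta^-$ gives $\Delta_{T\cap T'}+\Delta_{T\cup T'}\leq 2\Delta$, forcing $T\cap T'=T\setminus\{x_2\}$ to also be a minimiser and contradicting minimum cardinality. Both proofs are sound and both exploit the minimum-cardinality clause in an essential way; yours is more conceptual and would generalise to any situation where an automorphism carries $T$ to a set of equal weight, at the cost of invoking (and, in a fully written version, proving) the submodularity of the edge-coverage function, whereas the paper's is more elementary bookkeeping that avoids any auxiliary lemma and delivers strictness directly.
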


\begin{proof}
Suppose that $T \in \T$, $x_1 \in U \setminus T$ and $x_2 \in U \cap T$. Let $A =  N_G(x_1) \setminus T $, and note that $A=  N_G(x_2) \setminus (T \cup \{x_1\})$ because $x_1$ and $x_2$ are twin. Let $a = |A|$, $T_1= T \cup \{x_1\}$ and $T_2= T \setminus \{x_2\}$. Because $T \in \T$ and $|T_2|<|T|$ we have $\Delta_{T_1} \geq \Delta_T$ and $\Delta_{T_2} > \Delta_T$.

Observe that $\Delta^-_{T_1}=\Delta^-_{T}+k\gamma(x_1)$ and $\Delta^+_{T_1} = \Delta^+_{T}+a$ since $E_{T_1}=E_{T} \cup \{x_1z :z \in A\}$. Therefore, $\Delta_{T_1} =  \Delta_T + a -k\gamma(x_1)$ and so, because $\Delta_{T_1} \geq \Delta_T$, $k\gamma(x_1) \leq a$. Now, $\Delta^-_{T_2}=\Delta^-_{T}-k\gamma(x_2)$ and $\Delta^+_{T_2} \leq \Delta^+_{T}-a$ since $E_{T_2}=E_{T} \setminus (\{x_2z : z \in A \} \cup X)$, where $X=\{x_1\}$ if $x_1x_2 \in E(G)$ and $X=\emptyset$ if $x_1x_2 \notin E(G)$. Therefore, $\Delta_{T_2} \leq  \Delta_T - a +k\gamma(x_2)$ and so, because $\Delta_{T_2} > \Delta_T$, $a < k\gamma(x_2)$. Combining $k\gamma(x_1) \leq a$ and $a < k\gamma(x_2)$, we see we must have $\gamma(x_1)<\gamma(x_2)$.

Now suppose $\gamma(x)=\gamma(x')$ for all $x,x' \in U$. By what we have just proved, either $U \setminus T = \emptyset$ and hence $U \subseteq T$, or $T \cap U = \emptyset$.
\end{proof}

Many of the results in this paper (including Theorem~\ref{T: statement2complete}) effectively concern $k$-star decompositions of $L \vee K_s$ for some specified graph $L$ and integer $s \geq k$. Lemma~\ref{L:counter ex for T1 and T2} below illustrates why we usually impose the condition that $s$ be at least $k$ in these results. First we state a special case of a result of Tarsi \cite[Theorem 4]{Tarsi1981} that we will often use to show that a certain graph is the leave of a partial $k$-star decomposition.

\begin{theorem}[\cite{Tarsi1981}] \label{T: Tarsi's result}
Let $G$ be a graph of order $n$ such that $\deg_G(x) \geq \frac{1}{2}n+k-1$ for each $x \in V(G)$. Then $G$ has a $k$-star decomposition if $|E(G)| \equiv 0 \mod{k}$.
\end{theorem}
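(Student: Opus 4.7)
The plan is to apply Lemma~\ref{L:hoffman}. To do so I must construct a $k$-precentral function $\gamma$ on $G$ satisfying $\Delta_T \geq 0$ for every $T \subseteq V(G)$. The guiding intuition is that $k\gamma(x)$ should approximate $\tfrac{1}{2}\deg_G(x)$; concretely I would choose $\gamma(x) \in \{\lfloor\deg_G(x)/(2k)\rfloor,\lceil\deg_G(x)/(2k)\rceil\}$ for each $x$, allocating ceilings and floors so that $\sum_{x}\gamma(x) = |E(G)|/k$. This target is an integer by the hypothesis $|E(G)| \equiv 0 \pmod{k}$ and lies between $\sum_x\lfloor\deg_G(x)/(2k)\rfloor$ and $\sum_x\lceil\deg_G(x)/(2k)\rceil$, so such an allocation exists.

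To verify $\Delta_T \geq 0$, I would use $|E_T| = \sum_{x\in T}\deg_G(x) - |E(G[T])|$ together with the handshake identity $2|E(G[T])| = \sum_{x\in T}\deg_G(x) - |E_{T,V(G)\setminus T}|$ (where $E_{T,V(G)\setminus T}$ denotes the edges with exactly one end in $T$) to rewrite
\[
\Delta_T \;=\; \sum_{x\in T}\left(\tfrac{1}{2}\deg_G(x) - k\gamma(x)\right) + \tfrac{1}{2}\,|E_{T,V(G)\setminus T}|.
\]
The min-degree hypothesis forces every vertex of $T$ to have at least $n/2+k-|T|$ neighbours outside $T$, and hence $|E_{T,V(G)\setminus T}| \geq |T|(n/2+k-|T|)$ whenever $|T|\leq n/2+k$. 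At $|T|=n/2$ this gives $\tfrac{1}{2}|E_{T,V(G)\setminus T}| \geq \tfrac{k}{2}|T|$, which is precisely the slack needed to absorb per-vertex deviations of magnitude up to $\tfrac{k}{2}$. Hence for $|T|\leq n/2$ the condition $\Delta_T \geq 0$ would follow, and the case $|T|>n/2$ is handled by applying the same estimate to $V(G)\setminus T$ via the equivalence $\Delta_T \geq 0 \iff k\sum_{x\in V(G)\setminus T}\gamma(x) \geq |E(G[V(G)\setminus T])|$.

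The principal obstacle is that the $\gamma$ produced above need not actually satisfy $|\tfrac{1}{2}\deg_G(x) - k\gamma(x)| \leq \tfrac{k}{2}$ at every vertex. When $\deg_G(x) \not\equiv k \pmod{2k}$ only one of the two rounding choices achieves this bound, so the sum constraint $\sum_x\gamma(x)=|E(G)|/k$ may force some vertex to round ``to the wrong side,'' producing a deviation as large as almost $k$ (as happens, for instance, when $G=K_4$ and $k=2$). Closing this gap is the delicate combinatorial step of the proof. The main lever is that vertices with $\deg_G(x) \equiv k \pmod{2k}$ are genuinely two-sided, with both rounding choices giving deviations of $\pm\tfrac{k}{2}$; together with the parity information $\sum_x\deg_G(x)=2|E(G)|\equiv 0\pmod{2k}$, this freedom can often be used to enforce the sum constraint without any vertex exceeding the $\tfrac{k}{2}$ bound. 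In the residual cases where a few vertices must carry larger deviations, one has to argue instead that these bad deviations are concentrated at so few vertices that the cut term $\tfrac{1}{2}|E_{T,V(G)\setminus T}|$ still dominates whenever they lie in $T$. Working out this balancing argument is the heart of the proof.
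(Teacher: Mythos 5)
First, a point of comparison: the paper offers no proof of this statement at all --- it is quoted as a special case of \cite[Theorem~4]{Tarsi1981} --- so there is no internal argument to measure yours against. Your plan of attack (reduce to the orientation criterion behind Lemma~\ref{L:hoffman}, choose $k\gamma(x)$ close to $\tfrac{1}{2}\deg_G(x)$, and absorb a per-vertex rounding error of at most $\tfrac{k}{2}$ into the cut term via $\tfrac{1}{2}|E_{T,V(G)\setminus T}| \geq \tfrac{k}{2}|T|$ for $|T|\leq n/2$) is sensible and in the spirit of Tarsi's methods; your identity for $\Delta_T$ and the complementation used to handle $|T|>n/2$ are both correct.

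However, the proposal stops exactly where the proof begins. As you yourself observe (and as your $K_4$, $k=2$ example demonstrates), the sum constraint $\sum_x\gamma(x)=|E(G)|/k$ can force a vertex to round to the wrong side, with deviation approaching $k$, and then the inequality $\sum_{x\in T}\bigl(\tfrac{1}{2}\deg_G(x)-k\gamma(x)\bigr)\geq -\tfrac{k}{2}|T|$ on which the entire verification rests is simply false. The two remedies you sketch --- exploiting the two-sided vertices with $\deg_G(x)\equiv k\pmod{2k}$, and otherwise arguing that the badly rounded vertices are few enough for the cut term to dominate --- are left entirely unexecuted, and neither is routine: the cut bound $|E_{T,V(G)\setminus T}|\geq |T|(n/2+k-|T|)$ has no slack at $|T|=n/2$, so any surplus deviation must be paid for from a different source (for instance $|E(G[S])|\leq\binom{|S|}{2}$ when $S$ is small), and one must also control where the bad vertices lie relative to $T$. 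Until that balancing argument is actually carried out, what you have is a plausible strategy rather than a proof; for the purposes of this paper the honest course is to leave the statement as a citation to Tarsi.
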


\begin{lemma} \label{L:counter ex for T1 and T2}
Let $k \geq 2$ and $n \geq 2$ be integers such that $k$ is odd and $n \equiv 2 \mod{2k}$. Let $L$ be a graph of order $n$ that has exactly one edge.
\begin{itemize}
    \item[\textup{(a)}]
There is a partial $k$-star decomposition of $K_n$ whose leave is $L$.
    \item[\textup{(b)}]
There is no $k$-star decomposition $L \vee K_{k-1}$, even though $|E(L \vee K_{k-1})| \equiv 0 \mod{k}$.
    \item[\textup{(c)}]
If $k$ is a power of an odd prime, there is no $k$-star decomposition $L \vee K_{s}$ for any $s<2k-2$.
\end{itemize}

\end{lemma}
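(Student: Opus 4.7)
First I would prove (a). Let $G = K_n \setminus E(L)$, the graph obtained by deleting the single edge of $L$ from $K_n$; a $k$-star decomposition of $G$, together with $L$ as the leave, produces the desired partial $k$-star decomposition of $K_n$. The edge count is $|E(G)| = \binom{n}{2} - 1$, and since $n \equiv 2 \pmod{2k}$ gives $\binom{n}{2} \equiv 1 \pmod{k}$, we have $|E(G)| \equiv 0 \pmod{k}$. The minimum degree of $G$ is $n - 2$, which satisfies the hypothesis $\deg \geq n/2 + k - 1$ of Theorem~\ref{T: Tarsi's result} exactly when $n \geq 2k + 2$. Because $n \equiv 2 \pmod{2k}$, either $n = 2$ (and $G$ has no edges, so the empty decomposition works) or $n \geq 2k + 2$ (and Theorem~\ref{T: Tarsi's result} applies).

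The main work is in (b). I would first verify $|E(L \vee K_{k-1})| = 1 + \binom{k-1}{2} + n(k-1) \equiv 0 \pmod{k}$ using $k$ odd and $n \equiv 2 \pmod{k}$. Suppose for a contradiction that a $k$-star decomposition exists with central function $\gamma$. Denote the endpoints of $L$'s edge by $u, v$, write $W = V(L)$, and let $S = V(K_{k-1})$. Every $w \in W \setminus \{u, v\}$ has degree $k - 1 < k$, forcing $\gamma(w) = 0$. Since $\deg(u) = \deg(v) = k$, we have $\gamma(u), \gamma(v) \in \{0, 1\}$; they cannot both equal $1$ (edge $uv$ would lie in two stars), and the edge constraint requires $\gamma(u) + \gamma(v) \geq 1$. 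By symmetry I may assume $\gamma(u) = 1$ and $\gamma(v) = 0$, so the star at $u$ uses all $k$ edges incident to $u$, in particular each edge $us$ with $s \in S$. Now fix $s \in S$: every edge from $s$ to $\{v\} \cup (W \setminus \{u, v\})$ must be centred at $s$ (since the other endpoint has $\gamma = 0$), giving $k\gamma(s) \geq n - 1$; on the other hand $us$ is centred at $u$, so $k\gamma(s) \leq \deg(s) - 1 = n + k - 3$. With $n \equiv 2 \pmod{k}$ these become $\gamma(s) \geq (n-2)/k + 1$ and $\gamma(s) \leq (n-2)/k$, which is impossible. Setting up this double bound on $\gamma(s)$ is the main obstacle.

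For (c), I would reduce to the cases already handled. A direct computation gives $|E(L \vee K_s)| \equiv (s+1)(s+2)/2 \pmod{k}$ using $n \equiv 2 \pmod{k}$. Since $k$ is a power of an odd prime and $\gcd(s+1, s+2) = 1$, divisibility of this quantity by $k$ is equivalent to $k \mid s+1$ or $k \mid s+2$. For $0 \leq s < 2k - 2$, the only such values are $s = k - 1$ and $s = k - 2$; for every other $s$ in this range, $|E(L \vee K_s)| \not\equiv 0 \pmod{k}$ and no decomposition can exist. The case $s = k - 1$ is precisely part (b). For $s = k - 2$ the vertices $u$ and $v$ have degree $k - 1 < k$ in $L \vee K_{k-2}$, so any central function would have $\gamma(u) = \gamma(v) = 0$, contradicting the constraint $\gamma(u) + \gamma(v) \geq 1$ imposed by the edge $uv$.
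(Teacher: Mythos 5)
Your proof is correct. Parts (a) and (c) follow the paper's own argument almost exactly: (a) applies Theorem~\ref{T: Tarsi's result} to $\overline{L}$ after checking the degrees and the edge count (with $n=2$ handled separately), and (c) narrows $s$ to $\{k-2,k-1\}$ and then dispatches $s=k-2$ via the degree of the endpoints of the edge of $L$ and $s=k-1$ via part (b); your direct computation that $|E(L\vee K_s)|\equiv \frac{(s+1)(s+2)}{2} \mod{k}$ is equivalent to the paper's route through part (a) and Theorem~\ref{T: TarsiandYamamoto}(c). Only in part (b) does your mechanism genuinely differ. The paper computes $\sum_{z\in S}\gamma(z)=(2r+\tfrac12)(k-1)$ where $n=2kr+2$, uses the pigeonhole principle to find a single $z_1\in S$ with $\gamma(z_1)\geq 2r+1$, and notes that then all $\deg(z_1)=k(2r+1)$ edges at $z_1$ must be centred at $z_1$, contradicting that $y_1z_1$ lies in the star centred at $y_1$. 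You instead show that \emph{every} $s\in S$ must satisfy $n-1\leq k\gamma(s)\leq n+k-3$, an interval of length $k-2$ containing no multiple of $k$ because $n\equiv 2\mod{k}$. Both arguments exploit the same tension (the degree $n+k-2$ of a vertex of $S$ falls one short once the star at $u$ claims the edge $us$), but yours is local, dispenses with the global edge count and the pigeonhole step, and is arguably cleaner.
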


\begin{proof}
We first prove (a) by showing that a $k$-star decomposition of $\overline{L}$ exists. This is trivial if $n=2$. If $n  \geq 2k+2$, then $\deg_{\overline{L}}(y) \geq n-2 \geq \frac{1}{2}n+k-1$ for each $y \in V(L)$ and $|E(\overline{L})|=\binom{n}{2}-1 \equiv 0 \mod{k}$  since $n \equiv 2 \mod{2k}$. Therefore, by Theorem~\ref{T: Tarsi's result}, a $k$-star decomposition of $\overline{L}$ exists.

We now prove (b). Note that $|E(L \vee K_{k-1})|=1+n(k-1)+\binom{k-1}{2}  \equiv 0 \mod{k}$ because $n \equiv 2 \mod{2k}$ and $k$ is odd. Let $r$ be the nonnegative integer such that $n= 2kr+2$. Suppose for a contradiction that there is a $k$-star decomposition $\D$ of $L \vee K_S$, where $|S|=k-1$, and let $\gamma$ be the central function of $\D$. Now $|E(L \vee K_S)|=1+n(k-1)+\binom{k-1}{2}$ and so $\sum_{x \in V(L) \cup S}\gamma(x) = (2r+\frac{1}{2})(k-1)+1$. Observe that $\deg_{L \vee K_S}(y_1)=\deg_{L \vee K_S}(y_2)=k$, where $y_1y_2$ is the only edge in $L$, and $\deg_{L \vee K_S}(y)=k-1$ for each $y \in V(L)\setminus \{y_1,y_2\}$. So, without loss of generality, $\gamma(y_1)=1$, every edge of $L \vee K_S$ incident with $y_1$ is in the star in $\D$ centred at $y_1$, and $\gamma(y)=0$ for each $y \in V(L)\setminus \{y_1\}$. Thus $\sum_{z \in S}\gamma(z)=(2r+\frac{1}{2})(k-1)$. By the pigeonhole principle, it follows that $\gamma(z_1)=2r+1$ for some $z_1 \in S$ because $|S|=k-1$. Now $\deg_{L \vee K_S}(z_1) = n +k-2 = k(2r + 1)$ noting that $n = 2kr +2$. So every edge incident with $z_1$ is in a star in $\mathcal{D}$ centred at $z_1$. But this contradicts the fact that the edge $y_1z_1$ is in the star in $\D$ centred at $y_1$.

We now prove (c). Suppose that $k$ is a power of an odd prime. Assume for a contradiction that $\D$ is a $k$-star decomposition of $L \vee K_S$ where $|S|=s$ for some nonnegative integer $s < 2k-2$. By part (a) of this lemma and Theorem~\ref{T: TarsiandYamamoto}(c),  we have that $n+s \equiv 0 \mod{k}$ or $n+s \equiv 1 \mod{k}$ and hence, because $n \equiv 2 \mod{2k}$, that $s \equiv k-2 \mod{k}$ or $s \equiv k-1 \mod{k}$. So $s \in \{k-2,k-1\}$ because $s < 2k-2$. So then $s = k-1$ because a $k$-star in $\D$ must be centred at an end vertex of the edge in $L$ and these vertices have degree $s+1$ in $L \vee K_S$. However, a $k$-star decomposition of $L \vee K_{k-1}$ does not exist by (b).
\end{proof}

\section{Embedding maximal partial \texorpdfstring{$\bm{k}$}{k}-star decompositions}

A partial $k$-star decomposition of a graph $G$ is \emph{maximal} if there is no star that can be added to it to produce a partial $k$-star decomposition of $G$ containing more stars. Thus, a partial $k$-star decomposition of a graph $G$ is maximal if and only if its leave has maximum degree at most $k-1$. In this section we prove results about embedding maximal partial $k$-star decompositions of $K_n$ in $k$-star decompositions of $K_{n+s}$ where $s \geq k$. These results will be crucial in proving the main theorems.

An \emph{independent set} in a graph is a set of its vertices that are pairwise non-adjacent. The \emph{independence number} $\alpha(G)$ of a graph $G$ is the maximum cardinality of an independent set in $G$. In \cite[Corollary 2]{CaroRoditty1985}, Caro and Roditty note that if a graph $G$ has a decomposition into $k$-stars then $\alpha(G) \geq |V(G)|-\frac{1}{k}|E(G)|$. This can be seen by observing that any edge in $G$ must have a star of the decomposition centred on at least one of its end-vertices. For the cases we are interested in, we formalise this observation in the following lemma.

\begin{lemma}\label{l:obstacle}
Let $k \geq 2$, $n \geq 1$ and $s \geq 0$ be integers, and let $L$ be a graph of order $n$. If there is $k$-star decomposition of $L \vee K_s$, then $\alpha(L) \geq n+s-\frac{1}{k}|E(L \vee K_s)|$.
\end{lemma}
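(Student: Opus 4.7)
The plan is to consider a hypothesised $k$-star decomposition $\D$ of $L \vee K_s$, let $C$ be the set of vertices that are centres of at least one star in $\D$, and extract a large independent set in $L$ from $V(L) \setminus C$. The central observation is that $V(L) \setminus C$ is automatically an independent set in $L$: if two vertices $u,v \in V(L) \setminus C$ were adjacent in $L$, then the edge $uv$ would belong to $L \vee K_s$, and since every star has its unique centre at one end of each of its edges, the edge $uv$ could not lie in any star of $\D$, contradicting the fact that $\D$ is a decomposition. Combined with the trivial upper bound $|C| \leq |\D| = \tfrac{1}{k}|E(L \vee K_s)|$, this gives
\[
\alpha(L) \;\geq\; |V(L) \setminus C| \;=\; n - |V(L) \cap C| \;=\; n + |V(K_s) \cap C| - |C|.
\]

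So the argument reduces to showing $|V(K_s) \cap C| \geq s$, i.e.\ that every vertex of the $K_s$ part is a centre. This need not be strictly true, which is where the (mild) main obstacle lies; I would handle it by splitting into two cases. In the first case every vertex of $V(K_s)$ is a centre, so $|V(K_s) \cap C| = s$, and plugging in yields $\alpha(L) \geq n + s - \tfrac{1}{k}|E(L \vee K_s)|$ directly. In the second case there is some $z \in V(K_s) \setminus C$; since $z$ is adjacent in $L \vee K_s$ to every other vertex, and every edge incident with $z$ must lie in a star centred at its other endpoint, every vertex of $V(L \vee K_s) \setminus \{z\}$ must be in $C$. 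Hence $|C| = n+s-1$, and comparing with $|C| \leq \tfrac{1}{k}|E(L \vee K_s)|$ gives
\[
n + s - \tfrac{1}{k}|E(L \vee K_s)| \;\leq\; 1 \;\leq\; \alpha(L),
\]
where the last inequality uses $n \geq 1$ (so $L$ has at least one vertex, hence a trivial independent set of size~$1$).

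The two cases together establish the bound. No appeal to the earlier lemmas is needed; the proof is essentially just the ``every edge has a centre'' argument of Caro and Roditty combined with the small case analysis to absorb the $+s$ term coming from vertices in $V(K_s)$. I do not anticipate any real difficulty beyond being careful about the degenerate situation when $s \geq 1$ and not every vertex of $K_s$ is a centre.
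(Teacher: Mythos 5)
Your proof is correct and is essentially the same argument the paper uses: the paper applies the Caro--Roditty bound $\alpha(G)\geq |V(G)|-\frac{1}{k}|E(G)|$ to $G=L\vee K_s$ (whose proof is exactly your observation that the non-centres form an independent set) and then notes $\alpha(L\vee K_s)=\alpha(L)$, which is precisely what your two-case analysis verifies. The only difference is that you prove the cited inequality inline rather than quoting it.
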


\begin{proof}
If there is a $k$-star decomposition of $L \vee K_s$, then $\alpha(L \vee K_s) \geq n+s-\frac{1}{k}|E(L \vee K_s)|$ by \cite[Corollary 2]{CaroRoditty1985}. Furthermore, it is easy to see that $\alpha(L \vee K_s)=\alpha(L)$.
\end{proof}

In this section we show that, for a maximal partial $k$-star decomposition $\D$ of $K_n$ and an integer $s \geq k$ such that $\binom{n+s}{2}\equiv 0 \mod{k}$, the obstacle described by Lemma~\ref{l:obstacle} is the only thing that can prevent the existence of an embedding of $\D$ in a $k$-star decomposition of $K_{n+s}$. We do this in two lemmas: Lemma~\ref{L: no of total edges are small} deals with the case where the number of stars to be added is small and the obstacle may arise whereas Lemma~\ref{L: no of total edges are large} deals with the case where the number of stars to be added is large and the obstacle cannot arise.

\begin{lemma}\label{L: no of total edges are small}
Let $k$, $n$ and $s$ be integers with $s \geq k \geq 2$, and let $L$ be a graph of order $n$ with maximum degree at most $k-1$ and $|E(L \vee K_s)| \leq k(n+s)$. Then there is a $k$-star decomposition of $L \vee K_s$ if and only if $|E(L \vee K_s)| \equiv 0 \mod{k}$ and $\alpha(L) \geq n+s-\frac{1}{k}|E(L \vee K_s)|$.
\end{lemma}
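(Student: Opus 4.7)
The only-if direction is immediate: divisibility of $|E(L \vee K_s)|$ by $k$ follows from each star having exactly $k$ edges, and the independence bound is Lemma~\ref{l:obstacle}. For the converse, my plan is to build an explicit $k$-precentral function $\gamma$ on $G = L \vee K_s$ and then invoke Lemma~\ref{L:hoffman}. Writing $m = |E(G)|/k$, the bound $|E(G)| \leq k(n+s)$ forces $m \leq n+s$, so $n+s-m \geq 0$, while $\alpha(L) \leq n$ together with $\alpha(L) \geq n+s-m$ forces $m \geq s$. First I will fix an independent set $I \subseteq V(L)$ of size exactly $n+s-m$, set $W = V(L) \setminus I$ (so $|W| = m - s$), and define $\gamma(x) = 0$ for $x \in I$ and $\gamma(x) = 1$ for $x \in W \cup S$. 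Then $\sum_{x \in V(G)} \gamma(x) = (m-s) + s = m$, confirming that $\gamma$ is a $k$-precentral function for $G$.

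The heart of the argument is to verify $\Delta_T \geq 0$ for every $T \subseteq V(G)$, after which Lemma~\ref{L:hoffman}(i) delivers the required decomposition. It suffices to check this for $T \in \T$, and Lemma~\ref{L:hoffman}(ii) restricts any such $T$ to be disjoint from $I$, so $T \subseteq W \cup S$. Since $S$ is pairwise twin in $G$ and $\gamma$ is constant on $S$, Lemma~\ref{L:about S' in G} further reduces consideration to $T \cap S \in \{\emptyset, S\}$. If $T \subseteq W$, then each vertex of $T$ contributes its $s$ edges to $S$, so $|E_T| \geq s|T|$ and hence $\Delta_T \geq (s-k)|T| \geq 0$.

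I expect the remaining case $T = W' \cup S$ for some $W' \subseteq W$ to be the main obstacle, and I plan to handle it by a monotonicity argument. Replacing $W'$ by $W' \cup \{v\}$ for some $v \in W \setminus W'$ increases $\Delta^+_T$ by $\deg_L(v) - |N_L(v) \cap W'|$ (only the new $L$-edges from $v$ contribute, because the $s$ edges from $v$ to $S$ are already counted in $E_T$), and it increases $\Delta^-_T$ by $k$; since $L$ has maximum degree at most $k-1$, the net change in $\Delta_T$ is at most $(k-1) - k = -1$, so $\Delta_T$ strictly decreases with each enlargement of $W'$. Hence $\Delta_T$ attains its minimum at $W' = W$, where $T = V(G) \setminus I$ meets every edge of $G$ (by independence of $I$), so $|E_T| = |E(G)| = km = k|T|$ and $\Delta_T = 0$. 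This establishes $\Delta_T \geq 0$ in all cases, and Lemma~\ref{L:hoffman}(i) produces the desired $k$-star decomposition of $L \vee K_s$.
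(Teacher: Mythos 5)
Your proposal is correct and follows essentially the same route as the paper: the same $0/1$ precentral function vanishing exactly on the independent set, reduction via Lemma~\ref{L:hoffman}(ii) and Lemma~\ref{L:about S' in G} to the cases $T\cap S=\emptyset$ and $S\subseteq T$, and the observation that $\Delta_{V(G)\setminus I}=0$. The only cosmetic difference is that you phrase the second case as a direct monotonicity computation (each added vertex of $W$ changes $\Delta_T$ by at most $-1$), where the paper runs the same computation as a contradiction to the minimality of $T\in\T$.
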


\begin{proof}
The `only if' direction follows from Lemma~\ref{l:obstacle}, so we only need to prove the `if' direction.

Suppose that $|E(L \vee K_{S})| \equiv 0 \mod{k}$, where $S$ is a set with $|S|=s$. Let $V=V(L \vee K_S)$ and $b=\frac{1}{k}|E(L \vee K_S)|$, and suppose that $L$ has an independent set $A$ containing $n+s-b$ vertices. Note that $n+s-b \geq 0$ because $|E(L \vee K_{S})| \leq k(n+s)$ by our hypotheses. Define a  $k$-precentral function $\gamma$ for $L \vee K_{S}$ by $\gamma(x) = 0$ for each $x \in A$ and $\gamma(x) =1$ for each $x \in V \setminus A$. This is indeed a $k$-precentral function for $L \vee K_{S}$ because $\sum_{x \in V}\gamma(x)=n+s-|A|=b$. Let $\G$ be the graph $L \vee K_{S}$ equipped with $\gamma$. We complete the proof by showing that $\Delta = 0$ and hence a star $\G$-decomposition exists by Lemma~\ref{L:hoffman}. Let $T \in \T$ and suppose for a contradiction that $\Delta_T < 0$. Since $\gamma(z)=1$ for all $z \in S$, we can apply Lemma~\ref{L:about S' in G} with $U=S$ to conclude that either $T \cap S= \emptyset$ or $S \subseteq T$. We consider these cases separately, with the latter splitting into two subcases.

\textbf{Case 1:} Suppose that $T \cap S= \emptyset$. This implies $T \subseteq V(L)$. Then $\Delta^+_T \geq s|T|$, because $E(K_{S,T}) \subseteq E_T$ and $\Delta^-_T = k|T|$ by the definition of $\gamma$ and Lemma~\ref{L:hoffman}(ii). Therefore, we have $\Delta^-_T \leq \Delta^+_T $ as $s \geq k$. This contradicts $\Delta_T <0$.

\textbf{Case 2a:} Suppose that $S \subseteq T$ but $T \neq V \setminus A$. Then there is a vertex $y \in V(L) \setminus (A \cup T)$ and, by the definition of $\gamma$, $\gamma(y)=1$. Let $T_1 = T \cup \{y\}$. Then $\Delta^+_{T_1} \leq \Delta^+_{T}+k-1$, noting that $\deg_L(y) \leq k-1$ and $\Delta^-_{T_1} = \Delta^-_{T}+k$. Therefore, $\Delta_{T_1} \leq \Delta_{T}-1$ contradicting $T \in \T$.

\textbf{Case 2b:} Suppose that $T=V \setminus A$. Then $\Delta^+_T = |E(L \vee K_S)|$ because $E_T=E(L \vee K_S)$ since $A$ is independent. Moreover, $\Delta^-_T = |E(L \vee K_S)|$ because $\gamma$ is a $k$-precentral function for $L \vee K_S$. So $\Delta^+_T = \Delta^-_T$ contradicting $\Delta_T<0$.
\end{proof}

Note that the condition $n \geq k$ in the following lemma will certainly hold whenever $L$ is the leave of a nontrivial $k$-star decomposition.

\begin{lemma} \label{L: no of total edges are large}
Let $k$, $n$ and $s$ be positive integers with $s \geq k \geq 2$ and $n \geq k$, and let $L$ be a graph of order $n$ with maximum degree at most $k-1$ and $|E(L \vee K_s)| \geq k(n+s)$. Then there is a $k$-star decomposition of $L \vee K_s$ if and only if $|E(L \vee K_s)| \equiv 0 \mod{k}$.

\end{lemma}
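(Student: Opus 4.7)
The \emph{only if} direction is immediate. For the \emph{if} direction, fix a set $S$ of $s$ vertices disjoint from $V(L)$, let $G = L \vee K_S$, and set $b = |E(G)|/k$. My plan is to build a $k$-precentral function $\gamma$ on $V(G)$ so that Lemma~\ref{L:about S' in G} sharply restricts the minimizers in $\T$, and then invoke Lemma~\ref{L:hoffman}.

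I would set $\gamma(y) = 1$ for every $y \in V(L)$ and distribute the remaining $b-n$ centres as evenly as possible over $S$: writing $b-n = cs + t$ with $0 \leq t < s$, pick $S_1 \subseteq S$ of size $t$, and set $\gamma \equiv c+1$ on $S_1$ and $\gamma \equiv c$ on $S_0 = S \setminus S_1$. Then $\sum_x \gamma(x) = b$, so $\gamma$ is $k$-precentral. The nontrivial degree check $k(c+1) \leq n+s-1$ reduces, after using $|E(L)| \leq n(k-1)/2$, to $n(k+1) \geq s(2k+1-s)$; since the right-hand side is maximised over integer $s \geq k$ at $s \in \{k,k+1\}$ with common value $k(k+1)$, the hypothesis $n \geq k$ suffices.

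By Lemma~\ref{L:hoffman} it is enough to show $\Delta_T \geq 0$ for every $T \in \T$. Since $S$ is pairwise twin and $\gamma$ takes at most two values on it, Lemma~\ref{L:about S' in G} forces $T \cap S \in \{\emptyset, S_1, S\}$. Setting $T' = T \cap V(L)$ and $A = V(L) \setminus T'$, the cases $T \cap S = \emptyset$ and $T \cap S = S$ are quick and mirror the proof of Lemma~\ref{L: no of total edges are small}: the former gives $\Delta_T \geq (s-k)|T'| \geq 0$ from the $T'$-to-$S$ edges, and the latter collapses to $\Delta_T = k|A| - |E(L[A])| \geq 0$ via $\Delta(L) \leq k-1$.

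The substantive case is $T \cap S = S_1$ with $1 \leq t \leq s-1$. A direct expansion yields
$$\Delta_T \;=\; \frac{s_0\bigl[t(s-2k) + 2|E(L)| + 2n(s-k)\bigr]}{2s} + |A|(k-s_0) - |E(L[A])|,$$
where $s_0 = s-t$. The first term is nonnegative via a short calculation using the rearranged edge-count hypothesis $2|E(L)| + 2n(s-k) \geq s(2k-s+1)$. The remaining two terms are controlled using $|E(L[A])| \leq \min(|E(L)|, |A|(k-1)/2)$, combined with $n \geq k$ and the first-term bound, splitting into sub-cases according to how $s_0$ compares with $(k+1)/2$ and with $k$. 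This is the main obstacle: unlike in Lemma~\ref{L: no of total edges are small}, $\gamma$ is not constant on $S$, so Lemma~\ref{L:about S' in G} does not rule out $T \cap S = S_1$, and neither upper bound on $|E(L[A])|$ suffices uniformly; closing the inequality in each sub-case requires a careful combination of the degree bound on $L$, the lower bound $n \geq k$, and the edge-count hypothesis.
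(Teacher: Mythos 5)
Your overall strategy is exactly the paper's: the same near-balanced $k$-precentral function $\gamma$ (constant $1$ on $V(L)$, values $c$ and $c+1$ on $S$), the same reduction via Lemmas~\ref{L:hoffman} and~\ref{L:about S' in G} to the three cases $T \cap S \in \{\emptyset, S_1, S\}$, and the two easy cases are handled correctly and identically to the paper. Your expansion of $\Delta_T$ in the case $T \cap S = S_1$ is also correct: it agrees with \eqref{e:deltaEquality} after the substitution $2ck+1-s_0 = \frac{1}{s}\bigl(t(s-2k)+2|E(L)|+2n(s-k)\bigr)$, and the nonnegativity of the first term does follow by a short argument (split on $s \geq 2k$ versus $k \leq s < 2k$). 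The degree check $k(c+1)\leq n+s-1$ is not actually required for $\gamma$ to be a $k$-precentral function, though it does no harm.

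The gap is that the decisive step — showing your displayed expression for $\Delta_T$ is nonnegative when $T \cap S = S_1$ — is asserted rather than proved. You correctly identify the regimes $s_0 \leq \frac{k+1}{2}$, $\frac{k+2}{2} \leq s_0 \leq k-1$ and $s_0 \geq k$, but only the first closes directly from $|E(L[A])| \leq \frac{1}{2}|A|(k-1)$ and $c \geq 1$. The other two need an ingredient your sketch never isolates: an \emph{upper} bound on $c$ derived from $|E(L)| \leq \frac{1}{2}n(k-1)$ (the paper's fact (F3), namely $c \leq \frac{1}{2ks}(n(2s-k-1)+s(s-1)+2ks_0)-1$). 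This matters because for $s_0 > \frac{k+1}{2}$, eliminating $|A|$ leaves a negative contribution of order $-\frac{2s_0-k-1}{k-1}|E(L[A])|$, and $|E(L[A])|$ can itself grow linearly in $c$ via the edge-count identity; whether the positive first term (of order $cks_0$) wins is a genuine race between two quantities both growing with $c$, and in the paper this forces a further split of the middle regime according to the sign of the coefficient of $c$ in \eqref{e:deltaBound3ii}, with one branch using the upper bound on $c$ and the other using $c \geq 1$ together with $s^2-(2k+1)s \geq -k(k+1)$. Saying the sub-cases close by ``a careful combination of the degree bound, $n \geq k$, and the edge-count hypothesis'' does not certify that they do; as written, the main case of the proof is incomplete.
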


\begin{proof}
If $L \vee K_s$ has a $k$-star decomposition, then obviously $|E(L \vee K_s)| \equiv 0 \mod{k}$. So it suffices to prove the `if' direction.

Assume that $|E(L \vee K_{S})| \equiv 0 \mod{k}$, where $S$ is a set with $|S|=s$, let $b=\frac{1}{k}|E(L \vee K_{S})|$ and note $b \geq n+s$ by the hypotheses of the lemma. Thus, we can define a $k$-precentral function $\gamma$ on $L \vee K_{S}$ such that $\gamma(y)=1$ for each $y \in V(L)$ and $\gamma(z)\in\{d,d+1\}$ for each $z \in S$, where $d=\lfloor\frac{b-n}{s}\rfloor$. Note that $d \geq 1$ since $b \geq n+s$ and let $S_0=\{z \in S: \gamma(z)=d\}$. We will show there is a star $\G$-decomposition where $\G$ is $L \vee K_S$ equipped with $\gamma$.

Let $T \in \T$, $H=L[V(L) \setminus T]$, $h=|V(H)|$, and $e=|E(H)|$. By Lemma~\ref{L:hoffman}, it suffices to show that $\Delta_T \geq 0$. By Lemma~\ref{L:about S' in G} with $U=S$, we have that $T \cap S \in \{\emptyset,S \setminus S_0,S\}$. We separate the proof into three cases accordingly.

\smallskip
\noindent\textbf{Case 1:} Suppose that $T \cap S = \emptyset$. Then $T = V(L) \setminus V(H)$. Noting that $E_T=E(K_{S,V(L) \setminus V(H)}) \cup (E(L) \setminus E(H))$ and $\Delta_T^-=k(n-h)$, we have
\[\Delta_T = \bigl((n-h)s+|E(L)|-e\bigr)-k(n-h) = (n-h)(s-k)+|E(L)|-e.\]
This last expression is nonnegative because $n \geq h, s \geq k$ and $|E(L)| \geq e$.

\smallskip
\noindent\textbf{Case 2:} Suppose that $T \cap S = S$. Noting that $E_T=E(L \vee K_{S}) \setminus E(H)$, that $|E(L \vee K_{S})|=bk$, and that $\Delta_T^-=k(b-h)$, we see that
\[\Delta_T = (bk-e)-k(b-h) = kh-e.\]
This last expression is nonnegative because $e \leq \frac{1}{2}h(k-1)$ since $H$ has maximum degree at most $k-1$.

\smallskip
\noindent\textbf{Case 3:} Suppose that $T \cap S = S \setminus S_0$. Let $s_0=|S_0|$. Noting that
\[E_T=E(L \vee K_{S}) \mathbin{\big\backslash} \bigl(E(K_{S_0}) \cup E(K_{S_0,V(H)}) \cup E(H)\bigr),\] that $|E(L \vee K_{S})|=bk$, and that $\Delta_T^-=k(b-ds_0-h)$, we see that
\begin{equation}\label{e:deltaEquality}
\Delta_T=\Bigl(bk-\mbinom{s_0}{2}-hs_0-e\Bigr)-k(b-ds_0-h)=\mfrac{s_0}{2}(2dk+1-s_0)+h(k-s_0)-e.
\end{equation}
The remainder of the proof is a somewhat tedious verification that this last expression is nonnegative. We first observe the following three useful facts.
\begin{itemize}
    \item[(F1)]
$2e \leq h(k-1)$
    \item[(F2)]
$e \leq k (n + s (d+1) - s_0) -n s - \binom{s}{2}$
    \item[(F3)]
$d \leq \frac{1}{2ks}(n(2s-k-1) + s(s-1) + 2 k s_0)-1$
\end{itemize}
Note that (F1) holds because $H$ is a subgraph of $L$ and thus has maximum degree at most $k-1$. Also, (F2) holds because $e \leq |E(L)| = bk-ns-\binom{s}{2}$ and $b=n+s(d+1)-s_0$ from the definition of $\gamma$. Further, (F3) holds because $b=n+s(d+1)-s_0$, $b=\frac{1}{k}(|E(L)|+ns+\binom{s}{2})$ and $|E(L)| \leq \frac{1}{2}n(k-1)$ since $L$ has maximum degree at most $k-1$.  We divide this case into subcases depending on the value of $s_0$.

\smallskip
\noindent\textbf{Case 3a:} Suppose that $s_0 \geq k$. Then substituting $h \leq n$ and  (F2) into \eqref{e:deltaEquality} we obtain
\begin{equation}\label{e:deltaBound1}
\Delta_T \geq \mfrac{s-s_0}{2}\bigl(s+s_0+2(n-k)-2dk-1\bigr).
\end{equation}
Substituting (F3) into \eqref{e:deltaBound1} and rearranging, we obtain
\begin{equation*}
\Delta_T \geq \mfrac{s-s_0}{2s}\bigl((s_0-k)(s-k)+k(s+n-s_0-k)+n\bigr).
\end{equation*}
This last expression is nonnegative because $n \geq k$ and $s \geq s_0 \geq k$ using the conditions of this case.

\smallskip
\noindent\textbf{Case 3b:} Suppose that $s_0 \leq \frac{k+1}{2}$. Then substituting $e \leq \frac{1}{2}h(k-1)$ from (F1) into \eqref{e:deltaEquality} we obtain
\begin{equation*}
\Delta_T \geq \mfrac{s_0}{2}(2dk+1-s_0)+h(\tfrac{k+1}{2}-s_0).
\end{equation*}
This last expression can be seen to be nonnegative using  $d \geq 1$ and $1 \leq s_0 \leq \frac{k+1}{2}$  from the conditions of this case.

\smallskip
\noindent\textbf{Case 3c:} Suppose that $\frac{k+2}{2} \leq s_0 \leq k-1$. Then substituting $h \geq \frac{2e}{k-1}$ from (F1) into \eqref{e:deltaEquality} we obtain
\begin{equation}\label{e:deltaBound3i}
\Delta_T \geq \mfrac{s_0}{2}(2dk+1-s_0)-\mfrac{e}{k-1}(2s_0-k-1).
\end{equation}
Observing that $2s_0-k-1 > 0$ by the conditions of this case, substituting  (F2) and rearranging, we obtain
\begin{equation}\label{e:deltaBound3ii}
\Delta_T \geq \mfrac{2s_0-k-1}{k-1}\left(\mbinom{s}{2} + n(s-k) - k(s - s_0 )\right)-\mbinom{s_0}{2}+\mfrac{dk}{k-1}\bigl(s (k + 1) - s_0 (2 s - k + 1)\bigr).
\end{equation}
We further divide this subcase according to the sign of the coefficient of $d$ in (\ref{e:deltaBound3ii}).

\smallskip
\noindent\textbf{Case 3c(i):} Suppose that $s (k + 1) < s_0 (2 s - k + 1)$. Substituting  (F3) into (\ref{e:deltaBound3ii}) and simplifying, we obtain
\begin{equation}\label{e:deltaBound3bv}
\Delta_T \geq \mfrac{s-s_0}{2s}\bigl(n+k(n-s_0)+s_0(s-k)\bigr).
\end{equation}
We can easily see that $\Delta_T$ is nonnegative since  $s \geq k$, $n \geq k$ and $s_0 \leq k-1$ by the conditions of Case 3c.

\smallskip
\noindent\textbf{Case 3c(ii):} Suppose that $s (k + 1) \geq s_0 (2 s - k + 1)$.
Substituting $d \geq 1$ and $n \geq k$ in (\ref{e:deltaBound3ii}) and rearranging yields
\begin{equation}\label{e:deltaBound3aiii}
\Delta_T \geq  \mfrac{2s_0-k-1}{2(k-1)}\left(s^2-(2k+1)s-2k^2\right)+\mfrac{3k+1}{k-1}\mbinom{s_0}{2}.
\end{equation}
Recall that $2s_0 > k-1$ by the conditions of Case 3c. Since $s \geq k$ is an integer, either $s = k$ or $s \geq k+1$, and hence $s^2-(2k+1)s \geq -k(k+1)$. Substituting this into \eqref{e:deltaBound3aiii} and rearranging, we obtain

\begin{equation}\label{e:deltaBound3aiv}
\Delta_T \geq \mfrac{3k+1}{k-1}\mbinom{k-s_0+1}{2}.
\end{equation}
This last expression is clearly nonnegative since $s_0 \leq k-1$ by the conditions of Case 3c.
\end{proof}

\section{Proof of Theorems~~\ref{T: statement1} and \ref{T: statement2complete}}\label{S:proofOfTh1Th2}

Caro \cite{Caro} and Wei \cite{Wei1981} independently established the following lower bounds on the independence number of a graph.

\begin{theorem}[\cite{Caro}, \cite{Wei1981}]\label{T:CaroWei}
For any graph $G$, the following hold.
\begin{itemize}
    \item[\textup{(a)}]
$\alpha(G) \geq \sum_{x \in V(G)} \frac{1}{\deg_G(x) + 1}$
    \item[\textup{(b)}]
$\alpha(G) \geq \frac{|V(G)|^2}{2|E(G)| + |V(G)|}$
\end{itemize}
\end{theorem}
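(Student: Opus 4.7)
The plan is to prove part (a) by the probabilistic method using a random vertex ordering, and then deduce part (b) from (a) via an elementary inequality.

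For part (a), I would draw a uniformly random linear ordering $\pi$ of $V(G)$ and define $I_\pi \subseteq V(G)$ as the set of vertices $v$ that precede every vertex of $N_G(v)$ in $\pi$. The set $I_\pi$ is automatically independent: if $u,v \in I_\pi$ with $uv \in E(G)$, then $v \in N_G(u)$ forces $u$ to precede $v$, and symmetrically $v$ precedes $u$, a contradiction. Moreover, the event $v \in I_\pi$ is precisely the event that $v$ occupies the first position of the closed neighbourhood $\{v\} \cup N_G(v)$ in $\pi$, which has probability $\frac{1}{\deg_G(v)+1}$ by symmetry over this set of $\deg_G(v)+1$ vertices. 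Linearity of expectation then gives $\mathbb{E}[|I_\pi|] = \sum_{x \in V(G)} \frac{1}{\deg_G(x)+1}$, so some realisation of $\pi$ yields an independent set of at least this size, establishing (a).

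For part (b), the AM--HM inequality (equivalently Cauchy--Schwarz) applied to the positive reals $\{\deg_G(x)+1 : x \in V(G)\}$ gives
\[
\sum_{x \in V(G)} \frac{1}{\deg_G(x)+1} \;\geq\; \frac{|V(G)|^2}{\sum_{x \in V(G)} (\deg_G(x)+1)}.
\]
By the handshake identity $\sum_{x \in V(G)} \deg_G(x) = 2|E(G)|$, the denominator equals $2|E(G)| + |V(G)|$, so (b) follows immediately from (a).

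There is no serious obstacle here: the statement is classical and the random-ordering argument above is the standard proof attributed (independently) to Caro and to Wei, with a later presentation due to Alon and Spencer. The only conceptual step is the construction of $I_\pi$ in part (a); once that is in hand, both bounds are routine, and part (b) could alternatively be obtained directly by invoking convexity of $t \mapsto (t+1)^{-1}$ via Jensen's inequality on the degree sequence.
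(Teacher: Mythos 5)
Your proposal is correct: the random-permutation argument for (a) is the standard Caro--Wei proof, and your AM--HM deduction of (b) from (a) is essentially the same convexity argument the paper gives in the remark following the theorem. Note that the paper itself only proves the implication (a)\,$\Rightarrow$\,(b) and cites (a) to Caro and Wei without proof, so your treatment of (a) supplies a step the paper deliberately omits rather than diverging from it.
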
\medskip

Part (b) of Theorem~\ref{T:CaroWei} follows immediately from part (a) because, by convexity,
\[\medop\sum_{x \in V(G)} \mfrac{1}{\deg_G(x) + 1} \geq \mfrac{|V(G)|}{d+1} \qquad \text{where} \qquad d=\mfrac{2|E(G)|}{|V(G)|}.\]
In Lemma~\ref{L: newconditions on s} below we combine Theorem~\ref{T:CaroWei}(b) with Lemmas~\ref{L: no of total edges are small} and~\ref{L: no of total edges are large} to show that, for any graph $L$, a $k$-star decomposition of $L \vee K_s$ must exist if $|E(L \vee K_s)| \equiv 0 \mod{k}$ and $s$ is greater than a certain function of $k$ and $|V(L)|$. Theorem~\ref{T: statement2complete} then follows from Lemma~\ref{L: newconditions on s} and, in turn, Theorem~\ref{T: statement1} follows from Theorem~\ref{T: statement2complete}. For technical reasons we restrict Lemma~\ref{L: newconditions on s} to $k \geq 3$. Lemma~\ref{L:k=2} covers the case when $k=2$.

\begin{lemma} \label{L: newconditions on s}
Let $k$, $n$ and $s$ be positive integers with $s \geq k \geq 3$ and $n \geq k$, and let $L$ be a graph of order $n$ such that $|E(L \vee K_s)| \equiv 0 \mod{k}$. Then there is a $k$-star decomposition of $L \vee K_s$ if
\begin{equation}\label{e:sLowerBound}
s > k-n+\mfrac{1}{2} + \sqrt{\left(n-\sqrt{2k}\right)^2+k(k-3)+\tfrac{1}{4}}.
\end{equation}
In particular, such a decomposition exists if  $n > \frac{k(k-1)}{\sqrt{8k}-1}$.
\end{lemma}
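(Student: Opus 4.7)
The plan is to combine Theorem~\ref{T:CaroWei}(b) with Lemmas~\ref{L: no of total edges are small} and~\ref{L: no of total edges are large} and reduce the claim to a quadratic inequality in $s$. First I would perform a small reduction step: while some vertex $v$ of $L$ has degree at least $k$, delete the edges of an arbitrary $k$-star in $L$ centred at $v$. Writing $L'$ for the resulting subgraph, we have $V(L') = V(L)$, $|E(L')| \equiv |E(L)| \pmod{k}$, and $L'$ has maximum degree at most $k-1$. Any $k$-star decomposition of $L' \vee K_s$ together with the deleted stars is a $k$-star decomposition of $L \vee K_s$, so we may assume $L$ has maximum degree at most $k-1$. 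Set $m = |E(L)|$ and $b = |E(L \vee K_s)|/k \in \Z$.

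Next I would split into cases on the size of $|E(L \vee K_s)|$. If $|E(L \vee K_s)| \geq k(n+s)$, then Lemma~\ref{L: no of total edges are large} applies immediately. Otherwise $|E(L \vee K_s)| \leq k(n+s)$, and Lemma~\ref{L: no of total edges are small} applies provided $\alpha(L) \geq n+s-b$. Since $\alpha(L)$ is a non-negative integer, Theorem~\ref{T:CaroWei}(b) gives
\[
\alpha(L) \geq \bigl\lceil \tfrac{n^2}{2m+n} \bigr\rceil,
\]
so it suffices to show $\tfrac{n^2}{2m+n} > n+s-b-1$. Writing $M(s) := k(n+s) - ns - \binom{s}{2}$ and $F(m) := \tfrac{kn^2}{2m+n} + m$, and using $k(n+s-b) = M(s) - m$, this sufficient condition rearranges to
\[
F(m) + k > M(s).
\]

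The crucial step will be minimising $F$ over $m \geq 0$. Since $F$ is convex with its unique interior minimum at $m^* = n(\sqrt{2k}-1)/2$ (which lies in $[0, n(k-1)/2]$ for $k \geq 3$) and $F(m^*) = n\sqrt{2k} - n/2$, it suffices to verify $n\sqrt{2k} - n/2 + k > M(s)$. Substituting $M(s) = kn + s(k-n) - \binom{s}{2}$ and clearing denominators yields the strict quadratic inequality
\[
s^2 + s(2n-2k-1) - n(2k+1-2\sqrt{2k}) + 2k > 0.
\]
Routine algebra shows that its discriminant simplifies to $4\bigl((n-\sqrt{2k})^2 + k(k-3) + \tfrac{1}{4}\bigr)$, so its larger root is precisely $(k+\tfrac{1}{2}-n) + \sqrt{(n-\sqrt{2k})^2 + k(k-3) + \tfrac{1}{4}}$, matching the right-hand side of~\eqref{e:sLowerBound}.

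Finally, for the ``in particular'' statement I would substitute $s=k$ into the hypothesised inequality and square, obtaining the equivalent condition $n(2\sqrt{2k}-1) > k(k-1)$, i.e.\ $n > k(k-1)/(\sqrt{8k}-1)$. The main technical obstacle will be the discriminant simplification; the key conceptual observation is that integrality of $\alpha(L)$ supplies the extra ``$+k$'' on the left of the sufficient condition (via the ceiling on the Caro--Wei bound), which is exactly what turns the naive constant $k(k-1)$ under the square root into the sharper $k(k-3)$.
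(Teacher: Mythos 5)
Your proposal is correct and follows essentially the same route as the paper's proof: reduce to maximum degree at most $k-1$, split on whether $b \geq n+s$, apply Theorem~\ref{T:CaroWei}(b) together with the integrality of $\alpha(L)$ and $n+s-b$, minimise over the number of edges of $L$ at $e=\tfrac{n}{2}(\sqrt{2k}-1)$, and resolve the resulting quadratic in $s$. Your closing remark correctly identifies that the integrality step is what improves the constant under the square root from $k(k-1)$ to $k(k-3)$.
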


\begin{proof}
Observe that the right hand side of \eqref{e:sLowerBound} is real because $k \geq 3$. We first prove the first part of the lemma. Suppose that  \eqref{e:sLowerBound} holds. We may assume that $L$ has maximum degree at most $k-1$ because otherwise we can greedily delete $k$-stars from $L$ until this is the case, apply the proof, and finally add the deleted $k$-stars to the decomposition produced. Let $b = \frac{1}{k}|E(L \vee K_s)|$, note that $b$ is an integer because $|E(L \vee K_s)| \equiv 0 \mod{k}$, and let  $e = |E(L)|$. If $b\geq n+s$, then a $k$-star decomposition of $L \vee K_s$ exists by Lemma~\ref{L: no of total edges are large}, so we may assume that $b<n+s$. By Lemma~\ref{L: no of total edges are small} it suffices to show that $\alpha(L) \geq n+s-b$.

By Theorem~\ref{T:CaroWei} we have $\alpha(L) \geq \frac{n^2}{2e+n}$. So, because $\alpha(L)$ and $n+s-b$ are both integers, it is enough to show that $\frac{n^2}{2e+n} > n+s-b-1$.  Using $b=\frac{1}{k}(e+ns+\binom{s}{2})$ and multiplying through by $2k$, this is equivalent to showing that
\begin{equation} \label{E:testForS}
s^2+(2n-2k-1)s-2kn+2k+2e+\mfrac{2kn^2}{(2e+n)}
\end{equation}
is positive.
Considered as a function of a real variable $e \geq 0$, \eqref{E:testForS} is minimised when $e=\frac{n}{2}(\sqrt{2k}-1)$. Substituting this value for $e$ and rearranging, we see that \eqref{E:testForS} is at least
\[s^2 + (2n-2k-1)s + 2k  - \left(2k-2 \sqrt{2k}+1\right)n.\]
Considering this last expression as a quadratic in $s$, it can be seen that it is positive when  \eqref{e:sLowerBound} holds. Thus, \eqref{E:testForS} is positive and $\alpha(L) \geq n+s-b$, as required.

We now prove the second part of the lemma. Suppose that $n > \frac{k(k-1)}{\sqrt{8k}-1}$.  Since $s \geq k$, substituting $s=k$ into \eqref{e:sLowerBound} and rearranging shows that \eqref{e:sLowerBound} will hold if
\[n-\mfrac{1}{2} > \sqrt{\left(n-\sqrt{2k}\right)^2+k(k-3)+\tfrac{1}{4}}.\]
By squaring both sides of this expression and rearranging, we see that it is equivalent to  $n > \frac{k(k-1)}{\sqrt{8k}-1}$.
Therefore, by the first part of the lemma, a $k$-star decomposition of $L \vee K_s$ exists.
\end{proof}

We can now prove Theorem~\ref{T: statement2complete} directly from Lemma~\ref{L: newconditions on s}.

\begin{proof} [\textbf{\textup{Proof of Theorem~\ref{T: statement2complete}}}]
Let $L$ be the leave of a nonempty partial $k$-star decomposition of $K_n$ and note that this implies that $n > k$. Let $s$ be an integer such that $s \geq k$ and $\binom{n+s}{2} \equiv 0 \mod{k}$. Since $L$ is the leave of a partial $k$-star decomposition and $\binom{n+s}{2} \equiv 0 \mod{k}$, it follows that $|E(L \vee K_s)| \equiv 0 \mod {k}$. So, by Lemma~\ref{L: newconditions on s} if $k \geq 3$ and by Lemma~\ref{L:k=2} if $k=2$, there is a $k$-star decomposition of $L \vee K_s$.
\end{proof}

Lemma~\ref{L:counter ex for T1 and T2}(b) demonstrates that the lower bound on $s$ in Theorem~\ref{T: statement2complete} cannot be decreased no matter what lower bound we place on $n$. Next, in Lemma~\ref{L: bound on n}, we show that in the case $s=k$ the lower bound on $n$ in Theorem~\ref{T: statement2complete} is asymptotically best possible. To see that Lemma~\ref{L: bound on n} implies this, note that $\frac{k(k-1)}{\sqrt{8k}-1}=(\frac{k}{2})^{3/2}+O(k)$ as $k$ becomes large.

\begin{lemma} \label{L: bound on n}
Let $k =2^t$ for some odd integer $t \geq 7$, let $m = \sqrt{2k}$, and let $n = \frac{1}{4}km-k = (\frac{k}{2})^{3/2}-k$. Let $L$ be a graph of order $n$ that is a vertex disjoint union of $\frac{n}{m}$ copies of $K_m$. Then a partial $k$-star decomposition of $K_n$ whose leave is $L$ exists and furthermore it cannot be embedded in a $k$-star decomposition of $K_{n+k}$, even though $\binom{n+k}{2} \equiv 0 \mod{k}$.
\end{lemma}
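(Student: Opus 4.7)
The proof splits into two parts, and I will treat them separately.

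For the existence of a partial $k$-star decomposition of $K_n$ with leave $L$, it suffices (as in the proof of Lemma~\ref{L:counter ex for T1 and T2}(a)) to exhibit a $k$-star decomposition of $\overline{L}$, and my plan is to apply Theorem~\ref{T: Tarsi's result}. Every vertex of $\overline{L}$ has degree $n - m$, and using $n = k(m-4)/4$ with $m \geq 16$ it is a routine check to verify the degree condition $n - m \geq \tfrac{1}{2}n + k - 1$. For the divisibility, I would observe that $|E(\overline{L})| = n(n-m)/2$; writing $n = kq$ with $q = 2^{(t-3)/2} - 1$ (which follows from $k = m^2/2$ and $m = 2^{(t+1)/2}$) shows $|E(\overline{L})| = k \cdot q(n-m)/2$, and this is a multiple of $k$ because $n$ and $m$ are both even. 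The modular condition $\binom{n+k}{2} \equiv 0 \mod{k}$ is a separate check: since $k = 2^t$, one has $\binom{k}{2} \equiv k/2 \mod{k}$, and expanding $\binom{n}{2}$ modulo $2^t$ via $n = 2^t q$ with $q$ odd yields $\binom{n}{2} \equiv k/2 \mod{k}$, which combines to give the claim.

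For the non-embedding assertion, my plan is to invoke Lemma~\ref{l:obstacle} and reach a contradiction on the independence number. Since $L$ is a vertex-disjoint union of $n/m$ copies of $K_m$, we have $\alpha(L) = n/m$; substituting $k = m^2/2$ this simplifies to $\alpha(L) = m(m-4)/8$. A direct expansion of $|E(L \vee K_k)| = n(m-1)/2 + nk + \binom{k}{2}$ produces
\[
(n+k) - \mfrac{1}{k}|E(L \vee K_k)| = \mfrac{m(m+5)}{8},
\]
which exceeds $\alpha(L)$ by exactly $9m/8 > 0$. Lemma~\ref{l:obstacle} therefore forbids any $k$-star decomposition of $L \vee K_k$, as required.

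The calculations are elementary arithmetic throughout; the conceptual point of the construction is that taking $L$ to be a disjoint union of cliques of the precise size $m = \sqrt{2k}$ is what minimises $\alpha(L)$ relative to its edge count, causing the Caro--Roditty obstruction in Lemma~\ref{l:obstacle} to become the binding constraint. I do not anticipate any genuine obstacle beyond carefully bookkeeping the parameters $k$, $m$, $n$, $q$ through the divisibility and independence-number estimates.
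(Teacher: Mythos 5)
Your proposal is correct and follows essentially the same route as the paper: existence via Theorem~\ref{T: Tarsi's result} applied to $\overline{L}$, and non-embeddability via the independence-number obstruction of Lemma~\ref{l:obstacle}, with $\alpha(L)=n/m=\tfrac{1}{8}m(m-4)$ falling short of $(n+k)-\tfrac{1}{k}|E(L\vee K_k)|=\tfrac{1}{8}m(m+5)$. The arithmetic checks out (the paper states the same quantities as $\tfrac{k}{4}-\tfrac{m}{2}$ and $\tfrac{k}{4}+\tfrac{5m}{8}$), and your direct verification of $\binom{n+k}{2}\equiv 0\mod{k}$ is equivalent to the paper's observation that $n\equiv k\mod{2k}$.
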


\begin{proof}
Note that $m=2^{(t+1)/2}$ is an integer divisible by 8 because $t$ is odd and $t \geq 7$. Thus $n \equiv k \mod{2k}$, $\frac{n}{m}$ is an integer and $\binom{n+k}{2} \equiv 0 \mod{k}$. Note that $|E(L)|=\frac{n}{m}\binom{m}{2}=\frac{n}{2}(m-1)$. We first show that $L$ is the leave of a partial $k$-star decomposition of $K_n$. Note that $\deg_{\overline{L}}(y) = n-m \geq \frac{1}{2}n+k-1$ for each $y \in V(L)$ because $n= \frac{1}{4}km-k$ and $k \geq 128$. Furthermore, $E(\overline{L})=\binom{n}{2}-\frac{n}{2}(m-1) = \frac{n}{2}(n-m) \equiv 0 \mod{k}$ because $n \equiv 0 \mod{k}$ and $n-m$ is even. Therefore, by Theorem~\ref{T: Tarsi's result}, there is a $k$-star decomposition of $\overline{L}$.

We complete the proof by using Lemma~\ref{l:obstacle} to show that there is no $k$-star decomposition of $L \vee K_k$. Observe that
\[n+k-\mfrac{1}{k}|E(L \vee K_k)|=n+k-\mfrac{1}{k}\left(\mfrac{n}{2}(m-1)+kn+\mbinom{k}{2}\right)= \mfrac{k}{4}+\mfrac{5\sqrt{2k}}{8}\]
where the first equality follows using $|E(L)|=\frac{n}{2}(m-1)$ and the second follows using $n=\frac{1}{4}km-k$ and $m=\sqrt{2k}$. On the other hand, $\alpha(L)=\frac{n}{m}=\frac{k}{4}-\frac{k}{m}$ because an independent set in $L$ can contain at most one vertex from each copy of $K_m$. So we have $\alpha(L)<n+k-\frac{1}{k}|E(L \vee K_k)|$ and hence there is no $k$-star decomposition of $L \vee K_k$ by Lemma~\ref{l:obstacle}.
\end{proof}

Theorem~\ref{T: statement1} follows readily from Theorem~\ref{T: statement2complete}.

\begin{proof}[\textbf{\textup{Proof of Theorem~\ref{T: statement1}}}]
Let $\D$ be a partial $k$-star decomposition of $K_n$. If $\D$ is empty and $n$=1, then $\D$ is trivially its own embedding. If $\D$ is empty and $n \geq 2$, then there is an embedding of $\D$ in a $k$-star decomposition of $K_{2k}$ by Theorem~\ref{T: TarsiandYamamoto}(a). So in either case the result holds, and hence we may assume that $\D$ is nonempty.

If $k$ is even, let $s$ be an element of $\{k,\ldots,3k-2\}$ such that $n+s \equiv 0 \mod{2k}$ or $n+s \equiv 1 \mod{2k}$. If $k$ is odd, let $s$ be an element of $\{k,\ldots,2k-2\}$ such that $n+s \equiv 0 \mod{k}$ or $n+s \equiv 1 \mod{k}$. In either case such an $s$ exists because $\{k,\ldots,3k-2\}$ contains $2k-1$ consecutive integers and $\{k,\ldots,2k-2\}$ contains $k-1$ consecutive integers. Then $\binom{n+s}{2} \equiv 0 \mod{k}$ by our definition of $s$. So by Theorem~\ref{T: statement2complete} there is an embedding of $\D$ in a $k$-star decomposition of $K_{n+s}$ and hence the result is proved.
\end{proof}

Lemma~\ref{L:counter ex for T1 and T2}(c) shows that the upper bound of $2k-2$ on $s$ in the $k$ odd case of Theorem~\ref{T: statement1} cannot be improved for any $k$ that is a power of an odd prime. Next, in Lemma~\ref{L: tightness of T2}, we show that the upper bound of $3k-2$ on $s$ in the $k$ even case of Theorem~\ref{T: statement1} cannot be improved for any $k \geq 16$ that is a power of 4.

\begin{lemma} \label{L: tightness of T2}
Let $k = 2^t$ for some even $t \geq 4$, and let $n \geq 3k+2$ be an integer such that $n \equiv k+2 \mod{2k}$. Let $L$ be a graph of order $n$ that is a vertex disjoint union of one copy of $K_{\sqrt{k}}$, $\frac{1}{2}\sqrt{k}+1$ copies of $K_2$ and $n-2\sqrt{k}-2$ copies of $K_1$. A partial $k$-star decomposition of $K_n$ whose leave is $L$ exists and furthermore it cannot be embedded in a $k$-star decomposition of $K_{n+s}$ for any $s < 3k-2$.

\end{lemma}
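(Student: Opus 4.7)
The plan is two-fold: exhibit the partial $k$-star decomposition, then rule out embeddings for $s < 3k-2$ by first reducing via divisibility to two small cases and treating each by a central-function argument.

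For existence, I compute $|E(L)| = \binom{\sqrt{k}}{2} + \bigl(\tfrac{\sqrt{k}}{2}+1\bigr) = \tfrac{k}{2}+1$. The hypotheses $n \equiv k+2 \mod{2k}$ and $k=2^t$ with $t$ even yield $\binom{n}{2} \equiv \tfrac{k}{2}+1 \mod{k}$ by direct computation (write $n = 2km' + k + 2$ and expand $n(n-1)$ modulo $2k$, then halve), so $|E(\overline{L})| \equiv 0 \mod{k}$. Since $L$ has maximum degree $\sqrt{k}-1$, $\overline{L}$ has minimum degree $n-\sqrt{k} \geq \tfrac{n}{2}+k-1$ when $n \geq 3k+2$, and Theorem~\ref{T: Tarsi's result} delivers the required $k$-star decomposition of $\overline{L}$.

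For non-embedding, I observe $|E(L \vee K_s)| - \binom{n+s}{2} = |E(L)| - \binom{n}{2} \equiv 0 \mod{k}$, so divisibility of $|E(L \vee K_s)|$ by $k$ is equivalent to that of $\binom{n+s}{2}$. Since $k$ is a power of $2$, Theorem~\ref{T: TarsiandYamamoto}(b) turns this into $n+s \equiv 0 \mod{2k}$ or $n+s \equiv 1 \mod{2k}$, i.e., $s \equiv k-2 \mod{2k}$ or $s \equiv k-1 \mod{2k}$. So for $s < 3k-2$ only $s \in \{k-2,k-1\}$ require argument; all other such $s$ fail divisibility. For $s = k-2$, any $y$ in a $K_2$-component has $\deg_{L \vee K_s}(y) = 1 + (k-2) = k-1 < k$, forcing $\gamma(y) = 0$ in any central function $\gamma$; then both endpoints of any $K_2$-edge have $\gamma = 0$ and the edge lies in no star, a contradiction.

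The main case is $s = k-1$. Writing $n = 2km+k+2$ gives $|E(L \vee K_{k-1})|/k = 2m(k-1) + \tfrac{3k}{2}$. Standard degree bounds yield $\gamma(y) \in \{0,1\}$ for non-isolated $y \in V(L)$, $\gamma(y) = 0$ for isolated $y$, and $\gamma(z) \leq 2m+2$ for $z \in S$. For any $K_2$-edge $y_1y_2$, the constraint $\gamma(y_1)+\gamma(y_2) \geq 1$ combined with the observation that $\gamma(y_1) = \gamma(y_2) = 1$ would force $y_1y_2$ into two distinct stars gives $\gamma(y_1)+\gamma(y_2) = 1$. Fix $y_1$ with $\gamma(y_1) = 1$; since $k\gamma(y_1) = k = \deg(y_1)$, every edge at $y_1$ (in particular every edge $y_1z$ for $z \in S$) lies in the star at $y_1$. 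Hence the star at $z$ omits $y_1z$, so $k\gamma(z) \leq \deg(z) - 1 = 2k(m+1) - 1$ and therefore $\gamma(z) \leq 2m+1$ for every $z \in S$. Summing, $\sum_{z \in S}\gamma(z) \leq (k-1)(2m+1) = 2m(k-1)+k-1$. On the other hand $\sum_{y \in V(L)}\gamma(y) \leq \tfrac{3\sqrt{k}}{2}+1$, giving $\sum_{z \in S}\gamma(z) \geq 2m(k-1)+\tfrac{3k}{2}-\tfrac{3\sqrt{k}}{2}-1$. Comparing the two bounds forces $\tfrac{k}{2} \leq \tfrac{3\sqrt{k}}{2}$, i.e., $\sqrt{k} \leq 3$, contradicting $k \geq 16$. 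The subtle step is the uniform bound $\gamma(z) \leq 2m+1$: it turns one $K_2$-edge into a global constraint on $S$, which is exactly what makes the counting contradiction possible.
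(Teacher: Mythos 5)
Your proposal is correct and follows essentially the same route as the paper: Tarsi's theorem for existence, the divisibility reduction to $s \in \{k-2,k-1\}$, the degree argument killing $s=k-2$, and a central-function count for $s=k-1$. The only difference is cosmetic — the paper applies the pigeonhole principle to find one $z_1 \in S$ with $\gamma(z_1) \geq 2r+2$ and then derives the contradiction from the edge $y_1z_1$, whereas you apply the same edge-blocking observation to every $z \in S$ upfront to get the uniform bound $\gamma(z) \leq 2m+1$ and contradict the total count; both hinge on the same inequality $\tfrac{k}{2} > \tfrac{3}{2}\sqrt{k}$ for $k \geq 16$.
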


\begin{proof}
A simple calculation shows that $|E(L)|=\frac{1}{2}(k+2)$.
We first show that  $L$ is the leave of a partial $k$-star decomposition of $K_n$. Note that $\deg_{\overline{L}}(y) \geq n-\sqrt{k} \geq \frac{1}{2}n+k-1$ for each $y \in V(L)$ since $n \geq 3k+2$ and $k \geq 16$. Furthermore, $|E(\overline{L})| = \binom{n}{2}-\frac{1}{2}(k+2) \equiv 0 \mod{k}$ since $n \equiv k+2 \mod{2k}$. Therefore, a $k$-star decomposition of $\overline{L}$ exists by Theorem~\ref{T: Tarsi's result}.

Now assume for a contradiction that $\D$ is a $k$-star decomposition of $L \vee K_S$ where $|S|=s$ for some nonnegative integer $s < 3k-2$ and let $\gamma$ be the central function of $\D$. We must have that $n+s \equiv 0 \mod{2k}$ or $n+s \equiv 1 \mod{2k}$ by Theorem~\ref{T: TarsiandYamamoto}(b) and hence, because $n \equiv k+2 \mod{2k}$, that $s \equiv k-2 \mod{2k}$ or $s \equiv k-1 \mod{2k}$. Therefore, $s \in \{k-2,k-1\}$ since $s < 3k-2$.

Let $V_1$ be the vertex set of the copy of $K_{\sqrt{k}}$ in $L$ and let $V_2$ be the set of vertices in the $\frac{1}{2}\sqrt{k}+1$ copies of $K_2$ in $L$. If $s=k-2$, then $\deg_{L \vee K_S}(y)=k-1$ and hence $\gamma(y)=0$ for each $y \in V_2$ which contradicts the fact that each edge in $L[V_2]$ is in a star in $\mathcal{D}$. Thus it must be that $s=k-1$ and $\D$ is a $k$-star decomposition of $L \vee K_{k-1}$. Let $r$ be the positive integer such that $n= 2kr+k+2$.  Observe the following.
\begin{itemize}
    \item
$\sum_{x \in V(L) \cup S}\gamma(x) =  (2r+1)(k-1)+\frac{1}{2}k+1$ because $|E(L \vee K_{k-1})| = \frac{1}{2}(k+2)+n(k-1)+\binom{k-1}{2}$.
    \item
$\sum_{y \in V_1}\gamma(y) \leq \sqrt{k}$ because $\deg_{L \vee K_{S}}(y)=k+\sqrt{k}-2<2k$ for each $y \in V_1$ and hence $\gamma(y) \leq 1$ for all $y \in V_1$.
    \item
$\sum_{y \in V_2}\gamma(y) = \frac{1}{2}\sqrt{k}+1$ because $\deg_{L \vee K_S}(y)=k$ for each $y \in V_2$ and hence $\gamma(y_1)+\gamma(y_2)=1$ for each edge $y_1y_2$ in $L[V_2]$.
    \item
$\sum_{y \in V(L) \setminus (V_1 \cup V_2)}\gamma(y) = 0$ because $\deg_{L \vee K_S}(y) = k-1$ for each $y \in V(L) \setminus (V_1 \cup V_2)$.
\end{itemize}
Using these four facts and simplifying we have
\[\medop\sum_{z \in S  }\gamma(z) = \medop\sum_{x \in V(L) \cup S}\gamma(x) - \medop\sum_{y \in V(L)  }\gamma(y) \geq  (2r+1)(k-1)+\tfrac{1}{2}k -\tfrac{3}{2}\sqrt{k}>(2r+1)(k-1)\]
where the last inequality follows because $k \geq 16$. So, by the pigeonhole principle, $\gamma(z_1) \geq 2r+2$ for some $z_1 \in S$ because $s=k-1$. Now $\deg_{L \vee K_S}(z_1) = n+k-2 = k(2r + 2)$ noting that $n = 2kr +k+2$, so it must be that $\gamma(z_1) = 2r+2$ and that every edge incident with $z_1$ is in a star in $\mathcal{D}$ centred at $z_1$. But this contradicts the fact that, for any vertex $y_1 \in V_2$ such that $\gamma(y_1)=1$, the edge $y_1z_1$ must be in a star in $\mathcal{D}$ centred at $y_1$.
\end{proof}

\section{Proof of Theorem~\ref{T:NRimprovement}}\label{S:proofOfTh3}

From Lemma~\ref{L: newconditions on s}, it is not too difficult to prove Theorem~\ref{T:NRimprovement} in the case where $k$ is even. Note that in fact the argument in the proof also applies when $k$ is odd.

\begin{lemma}\label{L:NRimprovementEven}
Let $k \geq 2$ and $n \geq 1$ be integers. Any partial $k$-star decomposition of $K_n$ can be embedded in a $k$-star decomposition of $K_{n+s}$ for some $s$ such that $s < (6-2\sqrt{2})k$.
\end{lemma}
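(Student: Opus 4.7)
The plan is to apply Lemma~\ref{L: newconditions on s} after a careful case analysis on the residue of $n$ modulo $2k$, together with elementary direct constructions for a few trivial cases. I would first dispose of the easy cases. For $k = 2$, Lemma~\ref{L:k=2} immediately produces $s \leq 3 < (6-2\sqrt{2})\cdot 2$. For $n \leq k$, the partial decomposition must be empty (a $k$-star requires $k+1$ vertices), so the leave is $K_n$ and taking $s = 2k - n \leq 2k - 1$ gives $L \vee K_s = K_{2k}$, which admits a $k$-star decomposition by Theorem~\ref{T: TarsiandYamamoto}(a); here $s < (6-2\sqrt{2})k$. After further reducing to the maximal case (so the leave has maximum degree at most $k - 1$), the task for $k \geq 3$ and $n \geq k+1$ is to exhibit an integer $s$ in the interval $(\max(k, g(n)), (6-2\sqrt{2})k)$ satisfying $\binom{n+s}{2} \equiv 0 \mod{k}$, where $g(n)$ denotes the right-hand side of \eqref{e:sLowerBound}.

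The divisibility condition is most restrictive when $k$ is a power of $2$: then the valid values of $s$ modulo $2k$ form two consecutive residues $\{r, r+1\}$ where $r \equiv -n \pmod{2k}$, giving a maximum gap of $2k - 1$ between consecutive valid values. For other $k$ the set of admissible residues modulo $2k$ is strictly larger and the argument below goes through a fortiori. I would split into two cases by the size of $r$. If $r < (4-2\sqrt{2})k$, the candidate $s = r + 2k$ lies in $[2k, (6-2\sqrt{2})k)$ and trivially exceeds $g(n) \leq g(k+1) < \sqrt{2}\,k < 2k$, so this $s$ works. If instead $r \geq (4-2\sqrt{2})k$ (which forces $r > k$, since $4-2\sqrt{2} > 1$), then $s = r + 2k$ exceeds the target bound, but the first-period value $s = r + 1 \in [k+1, 2k]$ can be used provided one can verify $r + 1 > g(n)$.

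The main obstacle is verifying $r + 1 > g(n)$ in the second case at its worst instance, namely the smallest eligible $n = 4k - r$. After substituting into \eqref{e:sLowerBound} and squaring, the required inequality reduces to $(4k - r - \sqrt{2k})^2 < 8k^2 + 6k$, or equivalently (with $\beta := r/k$) to $(4-\beta)^2 < 8 + O(1/k)$. This holds precisely when $\beta \geq 4 - 2\sqrt{2}$ by virtue of the identity $(2\sqrt{2})^2 + 1 = 3^2$, and it is exactly this identity that forces the constant $6 - 2\sqrt{2} = 2 + (4 - 2\sqrt{2})$ to appear and to be tight for this approach. A short separate check will handle small $k$ (where lower-order terms in $g(n)$ matter) and the boundary residue cases such as $r \in \{0, 2k-1\}$ where the pair $\{r, r+1\}$ wraps around modulo $2k$, completing the proof.
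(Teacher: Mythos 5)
Your proposal is correct and follows essentially the same route as the paper: both reduce to Lemma~\ref{L: newconditions on s} via the monotonicity in $n$ of the right-hand side of \eqref{e:sLowerBound}, and both hinge on the same critical verification that this bound is below $(4-2\sqrt{2})k$ when $n+s$ is near $4k$ and $n \leq 2\sqrt{2}k$ (your inequality $(4k-r-\sqrt{2k})^2 < 8k^2+6k$). The only differences are organizational (you split by the residue of $n$ modulo $2k$ rather than by the size of $n$, and sometimes land on $n+s\equiv 1$ rather than $0 \pmod{2k}$), and the ``short separate check'' you defer is not actually needed, since the exact bound $(4k-r-\sqrt{2k})^2 \leq (2\sqrt{2}k-\sqrt{2k})^2 = 8k^2-8k^{3/2}+2k < 8k^2+6k$ holds for all $k \geq 3$.
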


\begin{proof}
Let $\D$ be a partial $k$-star decomposition of $K_n$ and $L$ be its leave. Note that we will have $|E(L \vee K_s)| \equiv 0 \mod{k}$ for any integer $s$ such that $n+s \equiv 0 \mod{2k}$. If $k=2$ then we can choose $s \in \{1,2,3,4\}$ such that $n+s \equiv 0 \mod{4}$ and $L \vee K_s$ will have a $2$-star decomposition by Lemma~\ref{L:k=2}, so we may assume $k \geq 3$. We consider three cases according to the value of $n$.

\smallskip
\noindent\textbf{Case 1:} Suppose that $n \geq 2\sqrt{2}k$. Let $s$ be an integer such that $(4-2\sqrt{2})k \leq s < (6-2\sqrt{2})k$ and $n+s \equiv 0 \mod{2k}$. By Lemma~\ref{L: newconditions on s} there is a $k$-star decomposition of $L \vee K_s$ and hence the result is proved provided that \eqref{e:sLowerBound} holds. The lower bound on $s$ given by \eqref{e:sLowerBound} can be seen to be decreasing in $n$, so it suffices to show that this bound is less than $(4-2\sqrt{2})k$ when $n = 2\sqrt{2}k$. Substituting $n = 2\sqrt{2}k$ into the bound gives
\[\left(1-2\sqrt{2}\right)k+\tfrac{1}{2}+\sqrt{9k^2-8k^{3/2}-k+\tfrac{1}{4}}\]
which is easily seen to be less than $(4-2\sqrt{2})k$ since the final term is less than $3k -\frac{1}{2}$.

\smallskip
\noindent\textbf{Case 2:} Suppose that $k+1 \leq n < 2\sqrt{2}k$. We show that we can embed $\D$ in a $k$-star decomposition of $K_{4k}$. Let $s=4k-n$ and note that $k \leq s < (6-2\sqrt{2})k$ since $k+1 \leq n < 2\sqrt{2}k$ and that $\binom{n+s}{2} \equiv 0 \mod{k}$. By Lemma~\ref{L: newconditions on s} there is a $k$-star decomposition of $L \vee K_s$ and hence the result is proved provided that  \eqref{e:sLowerBound} holds.  Now \eqref{e:sLowerBound} holds if and only if
\[\left(3k-\tfrac{1}{2}\right)^2>\left(n-\sqrt{2k}\right)^2+k(k-3)+\tfrac{1}{4}\]
and this can in turn be shown to hold using $n<2\sqrt{2}k$.

\smallskip
\noindent\textbf{Case 3:} Suppose that $1 \leq n \leq k$. Then $\D$ is empty and hence a $k$-star decomposition of $K_{2k}$, which exists by Theorem~\ref{T: TarsiandYamamoto}(a), is an embedding of $\D$.
\end{proof}

Lemma~\ref{L: even bound on s} below shows that if the constant $6-2\sqrt{2}$ in Theorem~\ref{T:NRimprovement} were decreased then the result would fail to hold for each sufficiently large $k$ that is 2 to some odd power. To see this, observe that the value of $n$ in the statement of Lemma~\ref{L: even bound on s} is at most $2\sqrt{k(2k+1)}+2\sqrt{2k}$ and hence is $2\sqrt{2}k+O(\sqrt{k})$ as $k$ becomes large.

\begin{lemma} \label{L: even bound on s}
Let $k = 2^t$ for some odd integer $t \geq 3$, let $m=\sqrt{2k}$, let $n$ be the smallest integer such that $n \equiv 0 \mod{m}$ and $n > 2\sqrt{k(2k+1)}+\sqrt{2k}$, and let $L$ be a graph of order $n$ that is a vertex disjoint union of   $\frac{n}{m}$ copies of $K_{m}$. A partial $k$-star decomposition of $K_n$ whose leave is $L$ exists and furthermore it cannot be embedded in a $k$-star decomposition of $K_{n+s}$ for any $s <  6k-n$.
\end{lemma}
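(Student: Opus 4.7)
The plan follows the two-part skeleton of Lemma~\ref{L: bound on n}: first construct the leave, then rule out small embeddings via Lemma~\ref{l:obstacle}. Writing $n = am$ with $a = n/m$ a positive integer gives $|E(L)| = a\binom{m}{2} = \tfrac{n(m-1)}{2}$, $|E(\overline{L})| = \tfrac{1}{2}a(a-1)m^2 = a(a-1)k$, and minimum degree $n - m$ in $\overline{L}$. A routine estimate from the hypothesis shows $n - m \geq \tfrac{1}{2}n + k - 1$ for $k \geq 8$, so Theorem~\ref{T: Tarsi's result} supplies a $k$-star decomposition of $\overline{L}$ and confirms that $L$ is the leave of a partial $k$-star decomposition of $K_n$.

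For the non-embedding claim, suppose for a contradiction that $L \vee K_s$ has a $k$-star decomposition for some $s < 6k - n$, and set $N = n + s$. Since $k = 2^t$, Theorem~\ref{T: TarsiandYamamoto}(b) forces $N \equiv 0$ or $1 \mod{2k}$. The hypothesis easily gives $n > 2k + 1$, and since $n$ is the smallest multiple of $m$ above the stated bound one checks $n \leq 2\sqrt{k(2k+1)} + 2\sqrt{2k} \leq 4k$ (the final inequality reduces to $4\sqrt{2k} \leq 2k+1$, valid for $k \geq 8$). Combined with $N < 6k$, this leaves only $N \in \{4k,\, 4k+1\}$ as candidates.

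Now I would invoke Lemma~\ref{l:obstacle} with $\alpha(L) = n/m$ (at most one vertex per $K_m$ block) and $|E(L \vee K_s)| = \binom{N}{2} - \tfrac{1}{2}n(n-m)$. Multiplying through by $2k = m^2$, the necessary condition $\alpha(L) \geq N - \tfrac{1}{k}|E(L \vee K_s)|$ rearranges to $f(N) := N^2 - (2k+1)N + 2nm - n^2 \geq 0$. Direct substitution then shows
\begin{equation*}
f(4k+1) < 0 \iff (n-m)^2 > 4k(2k+1),
\end{equation*}
which is precisely the hypothesis $n > 2\sqrt{k(2k+1)} + \sqrt{2k}$; the analogous test at $N = 4k$ requires only $(n-m)^2 > 2k(4k-1)$, which holds \emph{a fortiori}. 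Both candidate values of $N$ therefore violate Lemma~\ref{l:obstacle}, yielding the desired contradiction. The main subtlety is that the hypothesis on $n$ is engineered precisely so that the obstacle triggers at $N = 4k+1$; once the divisibility restriction pins down $\{4k, 4k+1\}$ as the only candidates, the remaining algebra is routine.
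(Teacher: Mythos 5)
Your proposal is correct and follows essentially the same route as the paper: establish the leave via Theorem~\ref{T: Tarsi's result}, use Theorem~\ref{T: TarsiandYamamoto}(b) to pin down $n+s\in\{4k,4k+1\}$, and then invoke Lemma~\ref{l:obstacle} with $\alpha(L)=n/m$. The only (cosmetic) difference is that you evaluate the obstacle quadratic exactly at $N=4k$ and $N=4k+1$, obtaining $f(4k+1)=4k(2k+1)-(n-m)^2$, whereas the paper bounds the corresponding expression in $s$ using $s\leq 4k-n+1$; both reduce to the hypothesis $n>2\sqrt{k(2k+1)}+\sqrt{2k}$.
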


\begin{proof}
Observe that $|E(L)|=\frac{n}{m}\binom{m}{2}=\frac{n}{2}(m-1)$. We first show that $L$ is the leave of a partial $k$-star decomposition of $K_n$. Note that $\deg_{\overline{L}}(y) = n-m \geq \frac{1}{2}n+k-1$ for each $y \in V(L)$ since $n > 2\sqrt{k(2k+1)}+\sqrt{2k}$. Furthermore, $|E(\overline{L})| =  \frac{n}{2}(n-m)\equiv 0 \mod{k}$ because $n \equiv 0 \mod{m}$. Therefore, by Theorem~\ref{T: Tarsi's result}, a $k$-star decomposition of $\overline{L}$ exists.

Now suppose for a contradiction that a $k$-star decomposition of $L \vee K_S$ exists where $|S|=s$ for some nonnegative integer $s < 6k-n$. We must have $n+s \equiv 0 \mod{2k}$ or $n+s \equiv 1 \mod{2k}$ by Theorem~\ref{T: TarsiandYamamoto}(b). Therefore, because $0 \leq s <6k-n$ and $n > 2k+1$, we have $s \in \{4k-n,4k-n+1\}$.

Now $\alpha(L)=\frac{n}{m}$ because an independent set in $L$ can contain at most one vertex from each copy of $K_{m}$. So we complete the proof by showing that $n+s-\tfrac{1}{k}(|E(L)|+ns+\binom{s}{2}) >\frac{n}{m}$ and hence concluding by  Lemma~\ref{l:obstacle} that there is no $k$-star decomposition of $L \vee K_{S}$. Using $|E(L)|=\frac{n}{2}(m-1)$ and $m=\sqrt{2k}$ and multiplying through by $2k$, this is equivalent to showing that
\begin{equation}\label{e:kEvenTightnessTest}
n\left(2k-2\sqrt{2k}+1\right)-s(s+2n-2k-1)
\end{equation}
is positive. Using $s \leq 4k-n+1$, \eqref{e:kEvenTightnessTest} is at least $n(n-2\sqrt{2k}) - 2k(4k+1)$. In turn this can be shown to be positive using $n > 2\sqrt{k(2k+1)}+\sqrt{2k}$.
\end{proof}

In order to prove Theorem~\ref{T:NRimprovement} when $k$ is odd, we need to make a closer examination of leaves of partial $k$-star decompositions of $K_n$ where $k < n \leq 2k$. It turns out that these leaves must contain a large clique and hence we can improve on the bound of Theorem~\ref{T:CaroWei}(b) for their independence number using Theorem~\ref{T:CaroWei}(a). Our first step is to improve on Theorem~\ref{T:CaroWei}(b) in the case where the graph considered contains a large clique.

\begin{lemma}\label{L:independent set}
If $L$ is a graph of order $n$ such that $L$ has a copy of $K_r$ as a subgraph and $|E(L)| \leq \frac{1}{2}n(r-1)$, then
\[\alpha(L) \geq 1 + \mfrac{(n-r)^2}{2|E(L)|+n-r^2}.\]
\end{lemma}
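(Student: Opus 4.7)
The plan is to apply Theorem~\ref{T:CaroWei}(a) directly to $L$, exploiting the $K_r$ subgraph to do better than the pooled bound in Theorem~\ref{T:CaroWei}(b). Let $R$ denote the vertex set of the $K_r$ subgraph and set $u = n - r$. For each $v \in R$ write $\deg_L(v) = r-1+d_v$, where $d_v$ counts the neighbours of $v$ outside $R$, and let $a = \sum_{v \in R} d_v$ be the number of $L$-edges with exactly one endpoint in $R$ and $b = |E(L[V(L) \setminus R])|$. Since $R$ induces a clique, $|E(L)| = \binom{r}{2} + a + b$, the hypothesis $|E(L)| \leq \tfrac{1}{2}n(r-1)$ is equivalent to $2(a+b) \leq u(r-1)$, and the target denominator $2|E(L)|+n-r^2$ equals $u + 2a + 2b$.

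The first step is to split the Caro--Wei sum as
\[
\alpha(L) \geq \sum_{v \in R} \frac{1}{r+d_v} + \sum_{y \in V(L) \setminus R} \frac{1}{\deg_L(y)+1}
\]
and bound each piece by Jensen's inequality applied to the convex function $x \mapsto 1/x$, exactly as in the standard derivation of Theorem~\ref{T:CaroWei}(b) from Theorem~\ref{T:CaroWei}(a), but now applied separately to each part of the partition. The denominators in the first sum total $r^2 + a$, producing the bound $r^2/(r^2+a)$; those in the second sum total $u + a + 2b$, producing the bound $u^2/(u+a+2b)$.

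It then remains to show that these two fractions sum to at least $1 + u^2/(u+2a+2b)$. Rewriting $1 - r^2/(r^2+a)$ as $a/(r^2+a)$ and computing the difference of the two $u^2$-fractions, this reduces (the case $a=0$ being immediate) to the polynomial inequality $u^2(r^2+a) \geq (u+a+2b)(u+2a+2b)$. Here the edge hypothesis enters decisively: from $2(a+b) \leq u(r-1)$ we deduce $u+2a+2b \leq ur$, and a fortiori $u+a+2b \leq ur$, so $(u+a+2b)(u+2a+2b) \leq u^2 r^2 \leq u^2(r^2+a)$. The only real obstacle is recognising that partitioning the Caro--Wei sum along the clique boundary before applying Jensen is what extracts the gain afforded by the clique; once this split is made, the closing inequality collapses the moment the edge bound is invoked in the form $2(a+b)\leq u(r-1)$.
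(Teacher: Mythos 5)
Your proof is correct. Both you and the paper start from Theorem~\ref{T:CaroWei}(a) and exploit the clique $R$ to beat the pooled bound of Theorem~\ref{T:CaroWei}(b), but the way the convexity is deployed differs. The paper performs a single global minimisation of $\sum_{i=1}^n \frac{1}{x_i+1}$ over all nonnegative reals with $\sum x_i = 2|E(L)|$ and $x_i \geq r-1$ on the $r$ clique coordinates; the hypothesis $|E(L)| \leq \frac{1}{2}n(r-1)$ enters only to verify $d \leq r-1$, which certifies that the minimiser puts the clique vertices exactly at degree $r-1$ and equalises the rest, and the resulting value is the target expression on the nose. You instead split the Caro--Wei sum along the clique boundary, apply AM--HM separately to each part using the \emph{actual} degree totals $r^2+a$ and $u+a+2b$ of the two parts (quantities the paper never isolates), and then need a further algebraic step, $(u+a+2b)(u+2a+2b) \leq u^2r^2 \leq u^2(r^2+a)$, to recombine the two fractions into the target; this is where the edge hypothesis does its work, via $u+2a+2b \leq ur$. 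Your route costs a little extra algebra at the end but avoids having to justify the location of a constrained optimum; it also keeps more information (the split of $2|E(L)|$ into $\binom{r}{2}+a+b$), which is genuinely needed since without the hypothesis your intermediate bound $\frac{r^2}{r^2+a}+\frac{u^2}{u+a+2b}$ can fall below $1+\frac{u^2}{u+2a+2b}$. Note that, like the paper's proof, your argument implicitly assumes $n>r$ (otherwise the statement itself degenerates), and your handling of the $a=0$ case is the right way to avoid dividing by $a$.
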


\begin{proof}
Let $V= V(L)$ and $R$ be a subset of $V$ such that $L[R]$ is a copy of $K_r$. Let $d=\frac{2|E(L)|-r(r-1)}{n-r}$ and note that $d \leq r-1$ since $|E(L)| \leq \frac{1}{2}n(r-1)$. By Theorem~\ref{T:CaroWei}(a) we have that
\begin{equation}\label{e:alphaBoundRestatement}
\alpha(L) \geq \medop\sum_{x \in V}\mfrac{1}{\deg_L(x)+1}.
\end{equation}
Observe that $\deg_L(x) \geq r-1$ for $x \in R$, that $|R|=r$, that $\sum_{x \in V}\deg_L(x)=2|E(L)|$, and that $d \leq r-1$. By convexity, the minimum value of $\sum_{i=1}^n\frac{1}{x_i+1}$, where the $x_i$ are nonnegative reals subject to the constraints $x_i \geq r-1$ for $i \in \{1,\ldots,r\}$ and $\sum_{i=1}^n x_i=2|E(L)|$, occurs when $x_i=r-1$ for each $i \in \{1,\ldots,r\}$ and $x_i=d$ for each $i \in \{r+1,\ldots,n\}$. Thus from \eqref{e:alphaBoundRestatement} we have
\[\alpha(L) \geq \mfrac{r}{(r-1)+1}+\mfrac{n-r}{d+1}=1 + \mfrac{(n-r)^2}{2|E(L)|+n-r^2}.\qedhere\]
\end{proof}

By combining Lemma~\ref{L:independent set} with Lemmas~\ref{L: no of total edges are small} and \ref{L: no of total edges are large}, we can improve on Lemma~\ref{L: newconditions on s} in the special case where $L$ is the leave of a partial $k$-star decomposition of $K_n$ and $k < n \leq 2k$. Again the $k=2$ case is covered by Lemma~\ref{L:k=2}.

\begin{lemma} \label{L: newconditions1 on s}
Let $k$, $n$ and $s$ be integers such that $s \geq k \geq 3$, $2k \geq n > k$ and $\binom{n+s}{2} \equiv 0 \mod{k}$. Any partial $k$-star decomposition of $K_n$ can be embedded in a $k$-star decomposition of $K_{n+s}$ if
\begin{equation}\label{e:sLowerBoundnew}
s> k - n +\mfrac{1}{2} + \sqrt{4k\left(\sqrt{n - k} - \tfrac{1}{\sqrt{2}}\right)^2 + k (k - 3) + \tfrac{1}{4}}
\end{equation}
\end{lemma}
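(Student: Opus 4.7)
The plan is to follow the overall strategy of Lemma~\ref{L: newconditions on s}, but to replace Theorem~\ref{T:CaroWei}(b) with the stronger clique-sensitive bound of Lemma~\ref{L:independent set}. The observation that enables this is that the hypothesis $k < n \leq 2k$ forces each vertex of $K_n$ to be the centre of at most one $k$-star in any partial $k$-star decomposition $\D$, since $n - 1 \leq 2k - 1 < 2k$. Hence if $C$ denotes the set of star-centres and $m = |C|$, every edge used by $\D$ is incident to $C$, so the leave $L$ of $\D$ induces the complete graph $K_{n-m}$ on $V(K_n) \setminus C$; in particular $L$ contains $K_r$ as a subgraph, where $r = n - m$. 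Following Lemma~\ref{L: newconditions on s}, I would first reduce to the case that $L$ has maximum degree at most $k - 1$; a short check shows that the greedy addition of $k$-stars preserves the clique structure above and produces $m \geq 1$ (since $n > k$).

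A brief calculation using $n \leq 2k$ verifies the hypothesis $|E(L)| \leq \frac{1}{2}n(r-1)$ of Lemma~\ref{L:independent set}, and combined with the identity $2|E(L)| + n - r^2 = (n-r)(n+r-2k)$, that lemma yields the clean bound $\alpha(L) \geq \frac{2(n-k)}{n+r-2k}$. Let $b = |E(L \vee K_s)|/k$, which is an integer by the congruence hypothesis and satisfies $b = \binom{n+s}{2}/k - (n-r)$. If $b \geq n+s$, then Lemma~\ref{L: no of total edges are large} (applicable since $n \geq k$) immediately produces the required decomposition, so I would assume $b < n+s$ and invoke Lemma~\ref{L: no of total edges are small}: it suffices to show $\alpha(L) \geq n+s-b$. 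Writing $Q_0 = 2n + s - \binom{n+s}{2}/k$ so that $n + s - b = Q_0 - r$, and using that both quantities are integers, the task reduces to proving the strict inequality $\frac{2(n-k)}{n+r-2k} > Q_0 - r - 1$; clearing the positive denominator rewrites this as $h(r) > 0$, where
\[
h(r) = r^2 - (Q_0 + 2k - n - 1)r + Q_0(2k - n) + 3n - 4k.
\]

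The plan is then to show that $h$ (as a quadratic in $r$) has negative discriminant. A careful expansion yields the clean identity $(Q_0 + 2k - n - 1)^2 - 4[Q_0(2k - n) + 3n - 4k] = A^2 - 8(n-k)$, where $A = Q_0 - (2k - n + 1)$. This branch of the argument is only relevant when some valid $r$ satisfies $r < Q_0$, and combining this with the lower bound $r \geq 2k - n + 1$ (which follows from $L \supseteq K_r$ via $|E(L)| \geq \binom{r}{2}$) forces $A > 0$. In parallel, squaring and simplifying the hypothesis~\eqref{e:sLowerBoundnew} shows that it is equivalent to $A < 2\sqrt{2(n-k)}$. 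Together these give $A^2 < 8(n-k)$, hence $h > 0$ everywhere, and the proof is complete. I expect the main obstacle to be these two algebraic verifications --- the discriminant identity and the equivalence of \eqref{e:sLowerBoundnew} with $A < 2\sqrt{2(n-k)}$ --- both of which are routine but require careful bookkeeping of signs.
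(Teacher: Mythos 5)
Your proposal is correct and follows essentially the same route as the paper's proof: reduce to a maximal decomposition, observe that the non-centres induce a clique $K_r$ in the leave, apply Lemma~\ref{L:independent set} together with Lemmas~\ref{L: no of total edges are small} and~\ref{L: no of total edges are large}, and finish with algebra in $r$ and $s$. The only difference is presentational — you clear the denominator and show the resulting quadratic $h(r)$ has negative discriminant, whereas the paper minimises the corresponding rational expression over $r\in[2k+1-n,\,n]$ at $r=2k-n+\sqrt{2n-2k}$; these are the same computation, and your identities (the bound $\alpha(L)\geq \frac{2(n-k)}{n+r-2k}$, the discriminant $A^2-8(n-k)$, and the equivalence of \eqref{e:sLowerBoundnew} with $A<2\sqrt{2(n-k)}$) all check out.
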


\begin{proof}
Observe that the right hand side of \eqref{e:sLowerBoundnew} is real because $k \geq 3$. Suppose that  \eqref{e:sLowerBoundnew} holds. Let $\D$ be a partial $k$-star decomposition of $K_n$. We may assume that $\D$ is maximal for otherwise we can greedily add $k$-stars to $\D$ until it is maximal and then apply the proof. Let $L$ be the leave of $\D$ and note that $L$ has maximum degree at most $k-1$. Let $b = \frac{1}{k}|E(L \vee K_s)| $, note that $b$ is an integer because $|E(L \vee K_s)| \equiv 0 \mod{k}$ since $\binom{n+s}{2} \equiv 0 \mod{k}$ and $L$ is the leave of a partial $k$-star decomposition of $K_n$. If $b \geq n+s$, then a $k$-star decomposition of $L \vee K_s$ exists by Lemma~\ref{L: no of total edges are large}, so we may assume that $b < n+s$. By Lemma~\ref{L: no of total edges are small} it suffices to show that $\alpha(L) \geq n+s-b$.

Let $V_0$ be the set of vertices in $V(L)$ that have no star in $\D$ centred at them. No star in $\D$ can contain an edge between a pair of vertices in $V_0$ and hence $L[V_0]$ must be a complete graph. Because $\D$ contains $\frac{1}{k}(\binom{n}{2}-|E(L)|)$ stars, $|V_0| \geq r$ where $r= n - \frac{1}{k}(\binom{n}{2}-e)$ and $e=|E(L)|$. Note that $r \geq 1$ since $k \geq \frac{n}{2}$ from our hypotheses. So $L$ contains a copy of $K_r$ as a subgraph. Also, it follows from the definition of $r$ that $e=\binom{n}{2}-k(n-r)$ and hence, because $k \geq \frac{n}{2}$, that $e \leq \frac{1}{2}n(r-1)$. Thus, by Lemma~\ref{L:independent set} we have $\alpha(L) \geq 1+\frac{(n-r)^2}{2e+n-r^2}$.

So, because $\alpha(L)$ and $n+s-b$ are both integers, it is enough to show that $1+\frac{(n-r)^2}{2e+n-r^2} > n+s-b-1$. Using $b=\frac{1}{k}(e+ns+\binom{s}{2})$ and multiplying through by $2k$, this is equivalent to showing that
\begin{equation}\label{e:sBoundSmallDiscrim}
s^2+(2n-2k-1)s-2kn+4k+2e+\mfrac{2k(n-r)^2}{2e+n-r^2}
\end{equation}
is positive. Using $e=\binom{n}{2}-k(n-r)$, \eqref{e:sBoundSmallDiscrim} is equal to
\begin{equation}\label{e:sBoundSmallnIneq}
s^2+(2n-2k-1)s-(4k-n)(n-1)+2k\left(r+\mfrac{n-r}{n+r-2k}\right).
\end{equation}
Because $L$ contains a copy of $K_r$ as a subgraph, we have that $e \geq \binom{r}{2}$ or equivalently, using $e=\binom{n}{2}-k(n-r)$, that $\frac{1}{2}(n-r)(n+r-2k-1) \geq 0$. This implies that $2k+1-n \leq r \leq n$. Considered as a function of a real variable $r$ where $2k+1-n \leq r \leq n$, \eqref{e:sBoundSmallnIneq} is minimised when $r=2k-n+\sqrt{2n-2k}$ and, substituting this value for $r$ and rearranging, we have that \eqref{e:sBoundSmallnIneq} is at least
\[s^2 + (2n-2k-1)s-(6k-n)(n-1)+4k\left(k+\sqrt{2n-2k}-1\right).\]
Considering this last expression as a quadratic in $s$, we can see that it is positive when  \eqref{e:sLowerBoundnew} holds. Thus \eqref{e:sBoundSmallDiscrim} is positive and $\alpha(L) \geq n+s-b$, as required.
\end{proof}

We now finish the proof of Theorem~\ref{T:NRimprovement} by considering the case where $k$ is odd.

\begin{proof}[\textbf{\textup{Proof of Theorem~\ref{T:NRimprovement}.}}]
When $k$ is even the result follows from Lemma~\ref{L:NRimprovementEven}, so we may assume that $k$ is odd. Let $\D$ be a partial $k$-star decomposition of $K_n$ and $L$ be its leave.  Note that we will have $|E(L \vee K_s)| \equiv 0 \mod{k}$ for any integer $s$ such that $n+s \equiv 0 \mod{k}$. We consider four cases according to the value of $n$.

\noindent\textbf{Case 1:} Suppose that $n \geq 2\sqrt{2}k$. Let $s$ be an integer such that $\frac{5}{4}k \leq s < \frac{9}{4}k$ and $n+s \equiv 0 \mod{k}$. We saw in Case 1 of the proof of Lemma~\ref{L:NRimprovementEven} that the right hand side of \eqref{e:sLowerBound} is less than $(4-2\sqrt{2})k$ when $n \geq 2\sqrt{2}k$. So by Lemma~\ref{L: newconditions on s} there is a $k$-star decomposition of $L \vee K_s$ and hence the result is proved, because $s \geq \frac{5}{4}k >(4-2\sqrt{2})k$.

\smallskip
\noindent\textbf{Case 2:} Suppose that $\frac{7}{4}k < n < 2\sqrt{2}k$. We show that we can embed $\D$ in a $k$-star decomposition of $K_{4k}$. Let $s=4k-n$ and note that $k \leq s < \frac{9}{4}k$ since $\frac{7}{4}k < n < 2\sqrt{2}k$ and that $\binom{n+s}{2} \equiv 0 \mod{k}$. We showed in Case 2 of the proof of Lemma~\ref{L:NRimprovementEven} that \eqref{e:sLowerBound} holds when $s=4k-n$ and $n < 2\sqrt{2}k$. So by Lemma~\ref{L: newconditions on s} there is a $k$-star decomposition of $L \vee K_s$.

\smallskip
\noindent\textbf{Case 3:} Suppose that $k+1 \leq n \leq \frac{7}{4}k$. We show that we can embed $\D$ in a $k$-star decomposition of $K_{3k}$. Let $s=3k-n$ and note that $k \leq s < \frac{9}{4}k$ since $k+1 \leq n \leq \frac{7}{4}k$ and that $\binom{n+s}{2} \equiv 0 \mod{k}$. Then \eqref{e:sLowerBoundnew} holds if and only if
\begin{equation}\label{e:NRimprovementProofCase3}
\left(2k-\tfrac{1}{2}\right)^2>4k\left(n-\sqrt{2n-2k}\right)-k(3k+1)+\tfrac{1}{4}.
\end{equation}
For $n \geq k+1$, the right hand side of \eqref{e:NRimprovementProofCase3} is increasing in $n$ and hence \eqref{e:NRimprovementProofCase3} can be shown to hold for $k+1 \leq n \leq \frac{7}{4}k$ by substituting $n=\frac{7}{4}k$. So by Lemma~\ref{L: newconditions1 on s} there is a $k$-star decomposition of $L \vee K_s$.

\smallskip
\noindent\textbf{Case 4:} Suppose that $1 \leq n \leq k$. Then $\D$ is empty and hence a $k$-star decomposition of $K_{2k}$, which exists by Theorem~\ref{T: TarsiandYamamoto}(a), is an embedding of $\D$.
\end{proof}

Finally, we prove Lemma~\ref{L: odd bound on s}, which shows that if the constant $\frac{9}{4}$ in Theorem~\ref{T:NRimprovement} were decreased then the result would fail to hold for each sufficiently large $k$ that is a power of an odd prime. To see this, observe that the definition of $n$ in the statement of Lemma~\ref{L: odd bound on s} can be rephrased as $n=\frac{1}{2}a+k$ where $a$ is the smallest even perfect square that is greater than $\frac{3}{2}k+\sqrt{6k+6}+\frac{5}{2}$. Clearly then, $a=\frac{3}{2}k+O(\sqrt{k})$ and hence $n=\frac{7}{4}k+O(\sqrt{k})$ as $k$ becomes large.

\begin{lemma} \label{L: odd bound on s}
Let $k$ be a sufficiently large integer that is a power of an odd prime and let $n$ be the smallest integer such that $n > \frac{7}{4}k+\frac{1}{2}\sqrt{6k+6}+\frac{5}{4}$ and $\sqrt{2n-2k}$ is an integer. Let $m = \sqrt{2n-2k}$ and $r = 2k-n+m$, and let $L$ be a graph of order $n$ that is a vertex disjoint union of  $m-1$ copies of $K_m$ and a copy of $K_r$. A partial $k$-star decomposition of $K_n$ whose leave is $L$ exists and furthermore it has no embedding in a $k$-star decomposition of $K_{n+s}$ for any $s < 4k-n$.
\end{lemma}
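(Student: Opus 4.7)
The plan is to verify (a) existence and (b) non-embedding separately. For (a), it suffices to show that $\overline{L}$, the complete multipartite graph $K_{A_1,\ldots,A_{m-1},R}$ (with $A_i$ being the vertex set of a $K_m$ component and $R$ of the $K_r$ component), has a $k$-star decomposition. Using $n = k + m^2/2$ and $r = k - m^2/2 + m$, a direct calculation yields $|E(L)| = \tfrac{1}{2}((n-2k)^2 - n) + km$ and hence $|E(\overline L)| = \binom{n}{2} - |E(L)| = km(m-1)$, so divisibility by $k$ is automatic. We then apply Lemma~\ref{L:hoffman} to the $k$-precentral function $\gamma$ defined by $\gamma(v) = 1$ for $v \in V(L) \setminus R$ and $\gamma(v) = 0$ for $v \in R$, which has $\sum \gamma = n - r = m(m-1) = |E(\overline L)|/k$. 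By Lemma~\ref{L:about S' in G} and the pairwise twin structure of $\overline{L}$, it suffices to check $\Delta_T \geq 0$ for $T$ a union of some $A_i$'s and possibly $R$. A short case analysis, simplifying with $m^2 = 2(n-k)$, gives $\Delta_T = \tfrac{1}{2}m^2|I|(m-1-|I|)$ when $R \not\subseteq T$ (with $|I|$ counting the included $A_i$'s), and a similar nonnegative expression when $R \subseteq T$.

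For (b), suppose for contradiction that $L \vee K_s$ admits a $k$-star decomposition for some $s < 4k - n$. Since $\alpha(L) = m$ (one vertex per component clique), Lemma~\ref{l:obstacle} forces $\alpha(L) \geq n + s - |E(L \vee K_s)|/k$. Substituting the formula for $|E(L)|$ and using $m^2 = 2(n-k)$, this condition simplifies to
\[
u^2 - u(6k - 1) + 8k^2 - 4k - 2km(m-2) \geq 0, \qquad \text{where } u := (4k - n) - s > 0.
\]
By Theorem~\ref{T: TarsiandYamamoto}(c), $n + s \equiv 0$ or $1 \pmod k$, which combined with $n \equiv m^2/2 \pmod k$ forces $u \equiv 0$ or $-1 \pmod k$; thus only $u \in \{k-1, k, 2k-1, 2k\}$ need be checked. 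Substituting these in turn reduces the displayed inequality to $-2k(m-1)^2 \geq 0$, $-2km(m-2) \geq 0$, $3(k-1) \geq 2m(m-2)$, and $3k + 4m + 1 \geq 2m^2$ respectively. The first two clearly fail for $m \geq 3$.

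The main obstacle is the case $u = k-1$, where we must rule out $2m^2 \leq 3k + 4m + 1$, equivalently $(m-1)^2 \leq \tfrac{6k+6}{4}$. The hypothesis on $n$ rewrites as $m^2 > V := \tfrac{3k}{2} + \sqrt{6k+6} + \tfrac{1}{2}$. If $m \leq \tfrac{1}{2}\sqrt{6k+6}$, then $2m^2 > 2V \geq 3k + 4m + 1$; if $m \geq \tfrac{1}{2}\sqrt{6k+6} + 1$, then $(m-1)^2 \geq \tfrac{6k+6}{4}$ directly. Any integer $m$ falling in the narrow borderline interval $(\tfrac{1}{2}\sqrt{6k+6}, \tfrac{1}{2}\sqrt{6k+6}+1)$ is handled by exploiting the minimality of $m$, namely $(m-2)^2 \leq V$, which furnishes enough slack for sufficiently large $k$. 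The case $u = k$ is strictly easier and follows analogously, so the lemma will be established.
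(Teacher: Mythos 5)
Your overall strategy is the same as the paper's: the same precentral function $\gamma$ (equal to $1$ off the $K_r$ and $0$ on it) together with Lemmas~\ref{L:hoffman} and~\ref{L:about S' in G} for existence, and the Caro--Roditty obstacle of Lemma~\ref{l:obstacle} combined with the congruence restriction from Theorem~\ref{T: TarsiandYamamoto}(c) for non-embeddability. Your algebra up to the displayed quadratic in $u$ is correct (the list of four reduced inequalities is written in the reverse order to your list of $u$-values, but your subsequent text pairs $u=k-1$ with $2m^2\le 3k+4m+1$ correctly, and that is indeed the decisive case; the paper avoids the four-way split by noting the expression is decreasing in $s$ and substituting $s=3k-n+1$ only).

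The genuine problem is in the last step. First, the translation of the hypothesis on $n$ is off by $2$: from $n>\frac{7}{4}k+\frac{1}{2}\sqrt{6k+6}+\frac{5}{4}$ and $m^2=2(n-k)$ one gets $m^2>\frac{3k}{2}+\sqrt{6k+6}+\frac{5}{2}$, not $+\frac{1}{2}$. Second, the patch you propose for the ``borderline interval'' does not work: minimality of $n$ only yields an \emph{upper} bound on $m$ (roughly $(m-2)^2\le V$), whereas what is needed to refute $2m^2\le 3k+4m+1$ is a \emph{lower} bound on $m$; and your subcase $m\ge\frac{1}{2}\sqrt{6k+6}+1$ only gives the non-strict $(m-1)^2\ge\frac{6k+6}{4}$, which fails to produce a contradiction when equality holds --- and equality genuinely can occur (e.g.\ $k=6j^2-1$ prime, $m=3j+1$ with $j$ odd), so this is not a vacuous worry. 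Fortunately the gap is an artifact of the dropped constant: with the correct bound, $m^2>\frac{3k}{2}+\sqrt{6k+6}+\frac{5}{2}=\bigl(\tfrac{1}{2}\sqrt{6k+6}+1\bigr)^2$, hence $m>\frac{1}{2}\sqrt{6k+6}+1$, hence $(m-1)^2>\frac{6k+6}{4}$ and $2m^2>3k+4m+1$ strictly, with no case split and no appeal to minimality (minimality is needed only to keep $n=\frac{7}{4}k+O(\sqrt{k})$, so that $r>0$ and $4k-n$ is close to $\frac{9}{4}k$). Fix the constant and delete the borderline discussion and your argument closes.
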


\begin{proof}
Observe that, for sufficiently large $k$, $r=\frac{k}{4}+ O(\sqrt{k})$ because $n=\frac{7}{4}k+O(\sqrt{k})$ as noted in the paragraph before the lemma. We first show that $L$ is the leave of a partial $k$-star decomposition. Let $V_0$ be the vertex set of the copy of $K_r$ in $L$ and let $V_1,\ldots,V_{m-1}$ be the vertex sets of the copies of $K_m$ in $L$. Let $\gamma: V(L) \rightarrow \Z^{\geq 0}$ be defined by $\gamma(x) = 0$ for each $x \in V_0$ and $\gamma(y) = 1$ for each $y \in V(L) \setminus V_0$. Then $\gamma$ is a precentral function for $\overline{L}$, because we have $\frac{1}{k}(\binom{n}{2}-|E(L)|) = m(m-1)$ using $|E(L)|=\binom{r}{2}+(m-1)\binom{m}{2}$, the definition of $r$ and $n=\frac{1}{2}m^2+k$. Let $\G$ be  $\overline{L}$ equipped with $\gamma$ and let $T \in \T$. We will show that $\Delta_T=0$ and hence that a $k$-star decomposition of $\overline{L}$ exists. For each $i \in \{1,\ldots,m-1\}$, we have $V_i \subseteq T$ or $T \cap V_i = \emptyset$ by Lemma~\ref{L:about S' in G} with $U=V_i$. So without loss of generality we can assume that $T = V_1 \cup  \cdots \cup V_t$ for some $t \in \{0,\ldots,m-1\}$. Then $\Delta^+_T=\binom{t}{2}m^2+mt(n-mt)$ and $\Delta^-_T = kmt$. Thus, using  $n=\frac{1}{2}m^2+k$ and simplifying,
\[\Delta_T=\tfrac{1}{2}tm^2(m-1-t)\]
which is nonnegative since $t \in \{0,\ldots,m-1\}$. Thus $\Delta_T=0$ and a $k$-star decomposition of $\overline{L}$ exists.

Now suppose for a contradiction that a $k$-star decomposition of $L \vee K_S$ exists where $|S|=s$ for some nonnegative integer $s < 4k-n$. We must have $n+s \equiv 0 \mod{k}$ or $n+s \equiv 1 \mod{k}$ by Theorem~\ref{T: TarsiandYamamoto}(c). Therefore, because $0 \leq s <4k-n$ and $n > k+1$, we have $s \in \{2k-n,2k-n+1,3k-n,3k-n+1\}$.

Now $\alpha(L)=m$ because an independent set in $L$ can contain at most one vertex from the copy of $K_r$ and at most one vertex from each copy of $K_m$. So we complete the proof by showing that $n+s-\tfrac{1}{k}(|E(L)|+ns +\binom{s}{2})>m$ and hence concluding by  Lemma~\ref{l:obstacle} that there is no $k$-star decomposition of $L \vee K_s$. Using $|E(L)|=\binom{r}{2}+(m-1)\binom{m}{2}$, the definitions of $r$ and $m$, and multiplying through by $2k$, this is equivalent to showing that
\begin{equation}\label{e:kOddTightnessTest}
n(6k-n+1)-4k\left(k+\sqrt{2n-2k}\right)-s(s+2n-2k-1)
\end{equation}
is positive. Using $s \leq 3k-n+1$, \eqref{e:kOddTightnessTest} is at least $k(4n-7k-4\sqrt{2n-2k}-1)$. In turn this can be shown to be positive using $n > \frac{7}{4}k+\frac{1}{2}\sqrt{6k+6}+\frac{5}{4}$.
\end{proof}

\bigskip
\noindent\textbf{Acknowledgments.}
The second author was supported by Australian Research Council grants DP150100506 and FT160100048.



\end{document}